\documentclass[12pt]{article}

\usepackage[T1]{fontenc}
\usepackage[utf8]{inputenc}
\usepackage{lmodern}
\usepackage{amsmath,amssymb,amsthm,mathtools}
\usepackage{geometry}
\usepackage{hyperref}
\usepackage{enumitem}
\usepackage{mathrsfs}
\usepackage{authblk}
\usepackage{microtype}
\usepackage{xcolor}
\usepackage{amssymb, color}
\usepackage{bbm} 

\newtheorem{theorem}{Theorem}[section]
\newtheorem{lemma}[theorem]{Lemma}
\newtheorem{proposition}[theorem]{Proposition}

\theoremstyle{definition}
\newtheorem{definition}[theorem]{Definition}
\newtheorem{remark}[theorem]{Remark}

\newcommand{\Om}{\Omega}
\newcommand{\R}{\mathbb{R}}
\newcommand{\dx}{\,\mathrm{d}x}
\DeclarePairedDelimiter\abs{\lvert}{\rvert}
\newcommand{\smin}{s^{-}}
\newcommand{\pmin}{p^{-}}

\title{Multiplicity of weak solutions for mixed variable exponent triharmonic problems with hardy potentials}
\author[1]{Khalid Soualhine}
\author[2]{Adnane Belakhdar }
\author[3,4]{Mohamed karim Hamdani \thanks{Correspondence: \texttt{hamdanikarim42@gmail.com}\\ Khalid Soualhine: khalid.slhn2013@gmail.com, Adnane Belakhdar:ad.belakhdar@gmail.com, Mohamed Karim Hamdani: hamdanikarim42@gmail.com} }
\affil[1]{Regional Center for Education and Training Professions(CRMEF), Nador, Morocco}
\affil[2]{Department of Mathematics, Faculty of Science, Mohammed First University, Oujda, Morocco}
\affil[3]{Military Academy of Tunis, Military Academy Foundouk Jedid, 8012 Nabeul, Tunisia.}
\affil[4]{Science and Technology for Defense Laboratory, Center for Military Research Aouina, Tunisia.}
\date{}
\begin{document}
\maketitle
\begin{abstract}
This paper is devoted to the study of a nonlinear boundary value problem driven by a combination of mixed variable-exponent triharmonic operators and a Hardy-type singular potential. Specifically, we consider
\[
\begin{cases}
-a(x)\,\Delta_{p(x)}^{3}u - b(x)\,\Delta_{q(x)}^{3}u + m(x)\,\dfrac{|u|^{s(x)-2}u}{|x|^{3s(x)}} = \mu f(x,u), & \text{in } \Omega, \\
u = \Delta u = \Delta^{2}u = 0, & \text{on } \partial\Omega,
\end{cases}
\]
where $\Omega \subset \mathbb{R}^N$ is a smooth bounded domain with $N > 3$. The operators $\Delta_{p(x)}^{3}$ and $\Delta_{q(x)}^{3}$ represent triharmonic operators with variable exponents, creating a framework of nonstandard growth. Under suitable assumptions on the variable exponents $p(x), q(x), s(x)$, the weight functions $a(x), b(x), m(x)$, and the nonlinearity $f(x,u)$, we establish the existence of at least three distinct weak solutions. Our approach is variational, relying on critical point theory for functionals defined on variable-exponent Sobolev spaces. By combining Ricceri's and Bonanno-Marano's abstract three critical points theorems with new Hardy-Rellich inequalities adapted to the variable-exponent setting, we prove our main multiplicity result for small values of the parameter $\mu$. To our knowledge, this work presents the first systematic treatment of sixth-order problems featuring simultaneously mixed variable exponents and Hardy-type singular potentials.
\end{abstract}
\noindent\textbf{MSC:} 35J60; 35J30; 35J35; 35A15; 46E35

\noindent\textbf{Keywords:} Mixed classical triharmonic operators; Variable exponents; Hardy-type singular potential; Weak solutions; Three critical points theorem

\section{Introduction}
Nonlinear partial differential equations with nonstandard growth arise in several models where the material response varies spatially. A classical example is provided by electrorheological fluids, whose viscosity depends on the applied electric field, leading naturally to constitutive laws involving a variable exponent $p(x)$~\cite{Ruzicka2000}. Nonstandard-growth energies also appear in the homogenization and elasticity theory of heterogeneous and composite materials~\cite{Zhikov1987,Zhikov2011}. In image processing, variable-exponent energies have been employed to combine edge preservation with smooth diffusion in image denoising, enhancement, and restoration~\cite{ChenLevineRao2006}. Higher-order differential operators arise independently in gradient-elasticity theories, where energies involving higher derivatives are used to describe microstructural and size-dependent effects that classical elasticity cannot capture~\cite{Eremeyev2023}. These applications motivate the two main mathematical features considered here, namely spatially nonuniform growth and higher-order differential effects. They do not, however, imply that the specific problem studied below is a direct model of each of these physical phenomena.

The appropriate functional-analytic setting for these investigations is provided by the theory of variable-exponent Sobolev spaces $W^{k,p(x)}(\Omega)$ and Lebesgue spaces $L^{p(x)}(\Omega)$, which naturally accommodate the spatially heterogeneous nature of the growth conditions. In the triharmonic context, the fundamental energy functional incorporates the $p(x)$-growth behavior of the third-order gradient $\nabla\Delta u$, introducing formidable technical obstacles absent in both the classical constant-exponent setting and in lower-order variable-exponent problems. Compounding these difficulties, the Navier boundary conditions $u = \Delta u = \Delta^2 u = 0$ on $\partial\Omega$ impose intricate compatibility conditions across three distinct trace operators, thereby inducing a fundamental restructuring of the underlying variational framework and necessitating novel analytical strategies.

In this paper, we investigate the following mixed variable-exponent triharmonic boundary value problem:
\begin{equation}\label{eq:main}
\begin{cases}
-a(x) \Delta^3_{p(x)} u - b(x) \Delta^3_{q(x)}u + m(x)\dfrac{|u|^{s(x)-2}u}{|x|^{3s(x)}} = \mu f(x,u), & \text{in } \Omega, \\[1.2em]
u = \Delta u = \Delta^2 u = 0, & \text{on } \partial\Omega,
\end{cases}
\end{equation}
where $\Omega \subset \mathbb{R}^N$ is a smooth bounded domain with $N > 3$, and the triharmonic operators are defined by
\begin{align*}
\Delta^3_{p(x)} u &:= \operatorname{div}\left(\Delta(|\nabla\Delta u|^{p(x)-2}\nabla\Delta u)\right), \\
\Delta^3_{q(x)} u &:= \operatorname{div}\left(\Delta(|\nabla\Delta u|^{q(x)-2}\nabla\Delta u)\right).
\end{align*}

The problem is governed by the following structural assumptions:

\noindent \textbf{(H1) Exponent conditions.} The functions $p(\cdot), q(\cdot), s(\cdot) \in C(\overline{\Omega})$ satisfy
$$1  < q^- \leq q(x) \leq q^+ < p^- \leq p(x) \leq p^+ < \frac{N}{3},$$
and $$1 < s^- \leq s(x) \leq s^+ < \frac{N}{3}.$$

\noindent \textbf{(H2) Weight functions.} The coefficients $a, b \in L^\infty(\Omega)$ satisfy $a(x) \geq a_0 > 0$, $b(x) \geq b_0 > 0$ almost everywhere, and $m \in L^\infty(\Omega)$ with $m \geq 0$ almost everywhere.

\noindent \textbf{(H3) Nonlinearity growth.} There exist $a_1 > 0$ and functions $\gamma, r \in C(\overline{\Omega})$ with $1 < \gamma(x), r(x) \leq p(x)$ such that
$$|f(x,t)| \leq a_1(|t|^{r(x)-1} + |t|^{\gamma(x)-1})$$
for almost every $x \in \Omega$ and all $t \in \mathbb{R}$.

\noindent \textbf{(H4) Localized positivity.} There exist $x_0 \in \Omega$, $R > 0$ with $B(x_0, R) \Subset \Omega$, and $\delta > 0$ such that
$$\inf_{x \in B(x_0, R/2)} F(x, \delta) > 0,$$
where $F(x,t) = \int_0^t f(x,\tau) \, d\tau$.

\noindent \textbf{(H5) Ambrosetti-Rabinowitz condition.} There exist $\mu_0 > p^+$ and $D > 0$ such that
$$0 < \mu_0 F(x,t) \leq tf(x,t)$$
for almost every $x \in \Omega$ and all $|t| > D$.

Research on nonlinear elliptic problems with variable exponents has developed along several intertwined axes: singular $(p(x),q(x))$-systems, higher-order operators such as the $p(x)$-biharmonic and $p(x)$-triharmonic, and multiplicity frameworks for singular $p(x)$-Laplacian equations.

Related nonlocal studies have treated degenerate variable-order fractional $p(\cdot)$-Kirchhoff equations with weights~\cite{AHM}. Although such models are nonlocal, they further illustrate the role of variable growth, weighted terms, and variational compactness in Kirchhoff-type problems.

In a foundational contribution, Alves and Moussaoui established existence results for classes of singular $(p(x),q(x))$-Laplacian systems, leveraging variable-exponent Sobolev embeddings and tailored sub/supersolution schemes to control the singular terms and the nonstandard growth in coupled settings~\cite{Alv}. Their analysis clarifies how the interplay between the exponents $p(\cdot)$ and $q(\cdot)$, together with the singular source structure, determines a priori bounds and compactness.

At the higher-order level, Kefi and Saoudi investigated singular problems driven by the $p(x)$-biharmonic operator, deriving existence and qualitative properties via variational methods adapted to $W^{2,p(x)}(\Omega)\cap W_0^{1,p(x)}(\Omega)$~\cite{Kef}. Related mixed $(p(x),q(x))$-biharmonic problems with singular terms were studied in~\cite{5}. These works highlight the technical hurdles that emerge beyond second order---notably the absence of standard maximum principles and the need for refined coercivity and compactness arguments for fourth-order energies with nonstandard growth.

On the second-order front with singularities, multiplicity for singular $p(x)$-Laplacian equations has been obtained through critical-point approaches calibrated to inhomogeneous growth, combining truncation and regularization of the singular nonlinearity, mountain-pass geometry, and comparison with auxiliary problems to pass to the limit~\cite{Sao}. Such results provide a blueprint for separating the singular behavior near the origin from the superlinear or subcritical behavior at infinity in the variable-exponent setting.

Most relevant to the present work, recent investigations of sixth-order $p(x)$-triharmonic operators have secured existence and multiplicity of weak solutions using variational frameworks on $W^{3,p(x)}(\Omega)\cap W_0^{1,p(x)}(\Omega)$, Palais--Smale compactness via variable-exponent embeddings, and, where applicable, abstract multiplicity theorems under suitable structural hypotheses on the nonlinearity~\cite{1,Sho,HCR,6,Shokooh2023}. These studies push variable-exponent analysis to higher-order models, demonstrating that many techniques from the $p(x)$-Laplacian case can be extended with appropriate adaptations to handle the additional derivatives and more complex boundary conditions.

Taken together, these lines of research delineate a coherent picture: singular sources can be accommodated within variable-exponent problems across orders two, four, and six by blending regularization and variational methods; compactness is recovered through tailored embeddings and a priori estimates; and multiplicity follows from critical-point geometry once the singular part is isolated and controlled. This literature provides the methodological scaffolding upon which the present work builds, particularly regarding the treatment of singular terms, the design of energy functionals in $W^{k,p(x)}\cap W_0^{1,p(x)}(\Omega)$, and the deployment of abstract multiplicity results in variable-exponent contexts as in \cite{HHAL}.

\bigskip
The present work develops a variational setting for mixed triharmonic operators
with variable exponents and Navier boundary conditions.  Hardy--Rellich
estimates and compact embeddings are incorporated directly into this setting,
which makes it possible to treat the singular potential without reducing the
sixth-order problem to a formal extension of its biharmonic counterpart.

Within this framework, a three-critical-points argument gives at least three
distinct weak solutions for parameters in an explicitly determined interval.
The proof relies on test functions adapted to the triharmonic energy and on a
careful verification of the compactness and geometric assumptions of the
abstract multiplicity theorem.  The Hardy--Rellich estimates used to establish
coercivity also clarify how the singular term interacts with variable growth.
The resulting thresholds reveal a connection between the variational geometry
and the parameter-dependent solution set.  To the best of our knowledge, these
features have not previously been studied together for a mixed
variable-exponent triharmonic equation with a Hardy-type potential.

The paper is structured as follows. Section~2 introduces the notation and conventions used throughout the manuscript. Section~3 recalls the necessary background on variable-exponent Lebesgue and Sobolev spaces, establishes the Hardy--Rellich inequality used in our analysis, and presents the variational formulation of the problem. Section~4 establishes the existence of a nontrivial weak solution. Section~5 contains the main multiplicity result and the verification of the hypotheses required by the three-critical-points theorem.

Throughout the paper, we employ variational methods combined with careful analysis of the interplay between the mixed differential operators, variable-exponent structure, and singular potential terms. The results demonstrate the rich solution structure of this class of problems and open new avenues for investigating higher-order nonlinear elliptic equations with nonstandard growth conditions.

\section{Basic Definitions and Preliminary Results}

Let $\Omega\subset\R^{N}$, $N>3$, be bounded with $C^{1}$ boundary. The variable exponents
$p(\cdot),q(\cdot)\in C(\overline{\Omega})$ satisfy  \textbf{(H1)}, with $r^{-}:=\displaystyle\operatorname {ess\,inf}_{x\in\Omega} r(x)$ and
$r^{+}:=\operatorname \displaystyle{ess\,sup}_{x\in\Omega} r(x)$ for $r\in\{p,q,s\}$.

We require some results on the variable exponent spaces $L^{p(\cdot)}(\Omega)$ and $W^{k,p(\cdot)}(\Omega)$, as well as some properties that we utilize later, to ensure the completeness of this paper.

Let $\Omega$ be a bounded domain of $\mathbb{R}^N$, denote
\[
C_+(\overline{\Omega})=\Bigg\{h(x):\quad h(x)\in C(\overline{\Omega}),\quad
h(x)>1, \quad \forall x\in \overline{\Omega}\Bigg\}.
\]
For any $h(\cdot)\in C_+(\overline{\Omega})$, we define
\[
h^+=\max\Bigg\{h(x): ~x \in\overline{\Omega}\Bigg\},\quad
h^-=\min\Bigg\{h(x): ~x \in\overline{\Omega}\Bigg\};
\]
\subsection{Variable Exponent Lebesgue Space}
For any $p(\cdot)\in C_+(\overline{\Omega})$, we define the variable exponent Lebesgue space

\begin{align*}
L^{p(\cdot)}(\Omega)=\Bigg\{u:\Omega\to\mathbb{R}\text{ measurable and }
 \int_{\Omega}|u(x)|^{p(x)}dx<\infty\Bigg\},
\end{align*}
equipped with the so-called \emph{Luxemburg norm}:
\[
|u|_{p(\cdot)}=\inf \Bigg\{\mu>0: ~\int_{\Omega}|\frac{u(x)}{\mu}|^{p(\cdot)}dx\leq 1\Bigg\}.
\]
Then $(L^{p(\cdot)}(\Omega),|\cdot|_{p(\cdot)})$ becomes a Banach space.

\begin{proposition}[\cite{FanZhao1998,Za-Fu}]\label{prop2.1}
The space
$\bigl(L^{p(\cdot)}(\Omega),|\cdot|_{p(\cdot)}\bigr)$
is separable, uniformly convex, and reflexive. Its dual space is
$L^{p'(\cdot)}(\Omega)$, where
\[
\frac{1}{p(x)}+\frac{1}{p'(x)}=1,
\qquad x\in\overline{\Omega}.
\]
Moreover, for every $u\in L^{p(\cdot)}(\Omega)$ and
$v\in L^{p'(\cdot)}(\Omega)$, the variable-exponent HÃ¶lder inequality gives
\[
\left|
\int_{\Omega}u(x)v(x)\,\mathrm{d}x
\right|
\leq
\left(
\frac{1}{p^{-}}+\frac{1}{(p')^{-}}
\right)
|u|_{p(\cdot)}|v|_{p'(\cdot)}
\leq
2|u|_{p(\cdot)}|v|_{p'(\cdot)}.
\]
\end{proposition}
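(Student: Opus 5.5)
The plan is to run the classical arguments for these properties with the modular
\[
\rho(u)=\int_{\Omega}|u(x)|^{p(x)}\,\mathrm{d}x
\]
in place of the $L^p$ norm. Throughout, the bounds $1<p^-\le p^+<\infty$ from \textbf{(H1)} are what make the modular comparable to powers of the Luxemburg norm. The basic link is the unit-ball property: $|u|_{p(\cdot)}<1$, $=1$ or $>1$ exactly when $\rho(u)<1$, $=1$ or $>1$. This follows from the continuity and strict monotonicity of $\lambda\mapsto\rho(u/\lambda)$, which holds because $p^+<\infty$. Consequently
\[
\min\bigl(|u|^{p^-},|u|^{p^+}\bigr)\le\rho(u)\le\max\bigl(|u|^{p^-},|u|^{p^+}\bigr).
\]
Completeness is obtained as usual: a Cauchy sequence in norm is Cauchy in modular, so some subsequence converges almost everywhere, and Fatou's lemma applied to the modular gives the limit.

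\textbf{Hölder inequality.} I would start with the pointwise Young inequality
\[
|ab|\le \frac{|a|^{p(x)}}{p(x)}+\frac{|b|^{p'(x)}}{p'(x)}.
\]
If either norm vanishes there is nothing to prove, so assume both are nonzero and normalize $\tilde u=u/|u|_{p(\cdot)}$, $\tilde v=v/|v|_{p'(\cdot)}$, so that $\rho_{p}(\tilde u)=\rho_{p'}(\tilde v)=1$. Integrating Young's inequality gives
\[
\int_\Omega|\tilde u\tilde v|\,\mathrm{d}x\le \frac1{p^-}+\frac1{(p')^-}.
\]
Undoing the normalization yields the stated constant. Since $p^-,(p')^->1$, the constant is at most $2$.

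\textbf{Separability.} The plan is to show that simple functions with rational values on rational cubes are dense. Bounded functions with compact support are dense by dominated convergence in the modular, since $\rho(u-u\chi_{\{|u|\le k\}\cap K})\to0$, and modular convergence implies norm convergence because $p^+<\infty$. Such functions can then be approximated in $L^\infty$, hence in $L^{p(\cdot)}$ on the bounded set $\Omega$, using $|\chi_E|_{p(\cdot)}\le\max(|E|^{1/p^-},|E|^{1/p^+})$.

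\textbf{Uniform convexity (main obstacle).} This is the step I expect to be hardest. Pointwise, $t\mapsto|t|^{p}$ is uniformly convex with moduli controlled uniformly for $p\in[p^-,p^+]$. I would use the Clarkson-type inequalities: for $p(x)\ge2$,
\[
\Bigl|\tfrac{a+b}{2}\Bigr|^{p}+\Bigl|\tfrac{a-b}{2}\Bigr|^{p}\le\tfrac12\bigl(|a|^p+|b|^p\bigr),
\]
and for $p(x)<2$ a corresponding estimate with a constant depending only on $p^-$. Splitting $\Omega$ into $\{p\ge2\}$ and $\{p<2\}$ then gives uniform convexity of the modular $\rho$. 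To pass to the norm, take $|u|=|v|=1$ with $|u-v|\ge\varepsilon$. Then $\rho(u)=\rho(v)=1$, and the modular inequalities force $\rho((u-v)/2)\ge\min((\varepsilon/2)^{p^-},(\varepsilon/2)^{p^+})$. This gives $\rho((u+v)/2)\le1-\eta(\varepsilon)$, and hence $|(u+v)/2|\le1-\delta(\varepsilon)$ by the unit-ball property and its quantitative version. Reflexivity then follows from Milman--Pettis.

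\textbf{Duality.} Each $v\in L^{p'(\cdot)}$ defines a bounded functional by the Hölder inequality. For surjectivity, take $\Phi\in(L^{p(\cdot)})^*$ and use the finiteness of $|\Omega|$. The set function $E\mapsto\Phi(\chi_E)$ is absolutely continuous, so Radon--Nikodym produces $v$ with $\Phi(u)=\int uv$ on simple functions. Testing against the truncations $u=|v|^{p'(x)-1}\operatorname{sgn}v\,\chi_{\{|v|\le k\}}$ shows $v\in L^{p'(\cdot)}$, with norm comparable to $\|\Phi\|$. Density of simple functions then extends the representation $\Phi(u)=\int uv$ to all of $L^{p(\cdot)}$. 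This is essentially the argument of Kováčik--Rákosník and Fan--Zhao, which can simply be cited.
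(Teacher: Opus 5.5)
The paper does not prove this proposition. It is quoted from Fan--Zhao and the standard variable-exponent literature, and your outline follows the classical arguments there. The unit-ball property, completeness, the H\"older inequality (your normalization gives exactly the constant $\frac{1}{p^-}+\frac{1}{(p')^-}$), separability and the Radon--Nikodym duality argument are all sound.

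The one step that does not work as written is uniform convexity on $\{p<2\}$. There is no ``corresponding estimate'' of Clarkson type for $1<p<2$. An inequality $|\frac{a+b}{2}|^{p}+c\,|\frac{a-b}{2}|^{p}\le\frac12(|a|^p+|b|^p)$ with $c>0$ is false there. For $a=1+h$, $b=1-h$ the convexity defect is $\frac12(|a|^p+|b|^p)-|\frac{a+b}{2}|^p=\frac{p(p-1)}{2}h^2+O(h^4)$, while $|\frac{a-b}{2}|^p=h^p\gg h^2$. The true substitute, $\frac12(|a|^p+|b|^p)-|\frac{a+b}{2}|^p\ge c\,(|a|+|b|)^{p-2}|a-b|^2$, is much smaller than $|\frac{a-b}{2}|^p$ wherever $|a-b|\ll|a|+|b|$. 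So splitting $\Omega$ into $\{p\ge2\}$ and $\{p<2\}$ does not by itself give the implication you use next: $\rho(u)=\rho(v)=1$ and $\rho(\frac{u-v}{2})\ge\theta$ imply $\rho(\frac{u+v}{2})\le1-\eta$.

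What is missing is a second splitting according to the size of $|u-v|$ relative to $\max(|u|,|v|)$. This is where the pointwise fact you state at the start of that step actually enters.

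By compactness and strict convexity, for each $\varepsilon'\in(0,2]$ there is $\delta>0$, uniform in $p\in[p^-,p^+]$, such that $|\frac{a+b}{2}|^p\le(1-\delta)\frac{|a|^p+|b|^p}{2}$ whenever $|a-b|\ge\varepsilon'\max(|a|,|b|)$. Put $A=\{x:|u-v|\ge\varepsilon'\max(|u|,|v|)\}$.

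Off $A$ one has $|\frac{u-v}{2}|^{p(x)}\le(\varepsilon'/2)^{p^-}(|u|^{p(x)}+|v|^{p(x)})$. Choosing $\varepsilon'$ with $2(\varepsilon'/2)^{p^-}\le\theta/2$ therefore forces $\int_A|\frac{u-v}{2}|^{p(x)}\,\mathrm{d}x\ge\theta/2$. On $A$ one has $\frac{|u|^{p(x)}+|v|^{p(x)}}{2}\ge\frac12|\frac{u-v}{2}|^{p(x)}$. Hence
\[
\rho\Bigl(\frac{u+v}{2}\Bigr)\le 1-\delta\int_A\frac{|u|^{p(x)}+|v|^{p(x)}}{2}\,\mathrm{d}x\le 1-\frac{\delta\theta}{4}.
\]
This handles all exponents at once, so the Clarkson split becomes unnecessary. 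Your conclusion $|\frac{u+v}{2}|_{p(\cdot)}\le(1-\delta\theta/4)^{1/p^+}$ and the appeal to Milman--Pettis then go through unchanged.
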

\subsection{Variable Exponent Sobolev Space}

The definition of the variable exponent Sobolev space $W^{k,p(\cdot)}(\Omega)$ is standard; see, for example,~\cite{FanZhao1998}:
\[
W^{k,p(\cdot)}(\Omega)=\Bigg\{ u\in L^{p(\cdot)}(\Omega):
D^{\alpha}u\in L^{p(\cdot)}(\Omega), |\alpha|\leq k \Bigg\},
\]
where $D^{\alpha}u=\frac{\partial^{|\alpha|}}{\partial x_{1}^{\alpha_1}
\partial x_{2}^{\alpha_2}\dots \partial x_{N}^{\alpha_N}}u$, with
$\alpha=(\alpha_1,\dots ,\alpha_N)$ is a multi-index and
$|\alpha|=\displaystyle\sum_{i=1}^N \alpha_i$. The space $W^{k,p(\cdot)}(\Omega)$ equipped
with the norm:
\[
\|u\|_{k,p(\cdot)}=\sum_{|\alpha|\leq k}|D^{\alpha}u|_{p(\cdot)},
\]
additionally turns into a reflexive and separable Banach space. We direct the reader to \cite{Za-Fu} for further information. Indicate
\[
p_{k}^{*}(\cdot)=\begin{cases}
\frac{Np(\cdot)}{N-kp(\cdot)}     & \text{if } kp(\cdot)<N,\\
+\infty       & \text{if } kp(\cdot)\geq N,
\end{cases}
\]
for any  $k\geq1$.

\begin{proposition}[\cite{Ay-Am}]\label{prop2.2}
For $p,r\in C_{+}(\overline{\Omega})$ such that $r(\cdot)\leq p^{*}_{k}(\cdot)$, there is a continuous embedding
\begin{align*}
W^{k,p(\cdot)}(\Omega)\hookrightarrow L^{r(\cdot)}(\Omega).
\end{align*}
If we replace $\leq$ with $<$, the embedding is compact.
\end{proposition}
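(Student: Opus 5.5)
The statement is the variable-exponent Sobolev embedding of Proposition~\ref{prop2.2}: $W^{k,p(\cdot)}(\Omega)\hookrightarrow L^{r(\cdot)}(\Omega)$ continuously when $r(\cdot)\le p_k^*(\cdot)$, and compactly when $r(\cdot)<p_k^*(\cdot)$. The plan is to reduce to the first-order case $k=1$ and then iterate. For $k=1$ the continuous and compact embeddings $W^{1,p(\cdot)}(\Omega)\hookrightarrow L^{r(\cdot)}(\Omega)$ are the classical Fan--Zhao results (\cite{FanZhao1998}). I would assume them, and note only the compact case's idea: it is local and uses continuity of $p$ and $r$.

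\textbf{Localization.} By uniform continuity of $p$ and $r$ on $\overline{\Omega}$ and strictness of $r<p_1^*$, cover $\overline{\Omega}$ by finitely many small sets $\Omega_i$ (with Lipschitz boundary) on which
\[
r_i^+:=\sup_{\Omega_i} r<(p_i^-)^*:=\frac{Np_i^-}{N-p_i^-}.
\]
On each $\Omega_i$, use $W^{1,p(\cdot)}(\Omega_i)\hookrightarrow W^{1,p_i^-}(\Omega_i)$ (bounded domain) and the Rellich--Kondrachov theorem to get compactness into $L^{r_i^+}(\Omega_i)$, hence into $L^{r(\cdot)}(\Omega_i)$. Summing over $i$ gives the compact embedding on $\Omega$.

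\textbf{Induction on $k$.} Let $u\in W^{k,p(\cdot)}(\Omega)$, so each $D^\alpha u$ with $|\alpha|\le k-1$ lies in $W^{1,p(\cdot)}$. Apply the first-order result with exponent $p_1^*$ (or with any exponent slightly smaller, by continuity, where $p\ge N$). This places $D^\alpha u\in L^{p_1^*(\cdot)}$, i.e.\ $u\in W^{k-1,p_1^*(\cdot)}(\Omega)$, continuously. Then use the identity
\[
\frac{1}{(p_1^*)_{k-1}^*}=\frac1{p_1^*}-\frac{k-1}{N}=\frac1p-\frac kN,
\]
so that $(p_1^*)^*_{k-1}=p_k^*$. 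The inductive hypothesis then yields $W^{k-1,p_1^*(\cdot)}\hookrightarrow L^{r(\cdot)}$ for $r\le p_k^*$, compactly if $r<p_k^*$. Composing a continuous embedding with a compact one gives a compact embedding.

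\textbf{Main obstacle.} The delicate point is the critical continuous case $r=p_k^*$ for variable $p$. There the local-constant argument fails, since it loses an $\varepsilon$ of integrability. One needs Diening's log-Hölder regularity of $p$ and a maximal-function/Riesz-potential argument (\cite{Za-Fu}), which mere continuity does not provide. I would state the critical case under log-Hölder continuity, following \cite{Ay-Am}.

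A second delicate point is the regions where $kp(x)\ge N$, where $p_k^*=\infty$. In those regions the strict inequality $r<p_k^*$ lets one replace $p$ locally by a slightly smaller exponent, which reduces them to the previous case.
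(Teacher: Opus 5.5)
The paper gives no proof of this proposition; it is simply quoted from \cite{Ay-Am}. Your doubt about the critical case is justified. In fact the ``$\le$'' half is \emph{false} for merely continuous exponents, so the issue is not only that the localization method fails there. Take $k=1$, $0\in\Omega$, and $p(x)=p_0-\omega(|x|)$ with $1<p_0<N$. Let $\omega$ be continuous and nondecreasing with $\omega(0)=0$, increasing by $\varepsilon_j$ on $[s_j/2,s_j]$ and constant on $[s_j,s_{j-1}/2]$. For example, take $s_j=e^{-j^3}$ and $\varepsilon_j=c\,j^{-2}$ with $c$ small, so that $\varepsilon_j\log(1/s_j)\to\infty$. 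Let $u_j$ equal $\lambda_j$ on $B(0,s_j)$ and decay linearly to $0$ on $B(0,2s_j)$, with $\lambda_j$ fixed by $\int_\Omega|\nabla u_j|^{p(x)}\,dx=1$. Then $\|u_j\|_{1,p(\cdot)}$ stays bounded. However, on $B(0,s_j/2)$ the exponent $p$ is at least $\varepsilon_j$ larger than its constant value on the annulus $B(0,2s_j)\setminus B(0,s_j)$ that carries $\nabla u_j$. Hence $\int_\Omega|u_j|^{p_1^*(x)}\,dx\gtrsim s_j^{-c'\varepsilon_j}\to\infty$. So under the stated hypotheses only the strict-inequality half can be proved. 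The critical half must be restated under log-H\"older continuity, as you propose, and the critical $k=1$ embedding cannot be taken as known from continuity alone.

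The genuine gap is that your induction, as written, does not prove even the strict-inequality half for $k\ge 2$. The step $u\in W^{k-1,p_1^*(\cdot)}(\Omega)$ applies the first-order embedding with target exponent exactly $p_1^*$. That is the critical embedding you have just set aside, and your parenthetical lowers the exponent only on $\{p\ge N\}$. So the compact case silently inherits a log-H\"older assumption.

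The fix is to step down to a strictly subcritical exponent everywhere. Pointwise, $r<p_k^*$ is equivalent to $1/r-1/p+k/N>0$. By continuity and compactness of $\overline{\Omega}$, this gives $1/r-1/p+k/N\ge 2\eta$ on $\overline{\Omega}$ for some $\eta\in(0,1/N)$. Define $\tilde p$ by $1/\tilde p:=\max\{1/p-1/N+\eta,\ \theta\}$ with $0<\theta<1/N$. Then $\tilde p\in C_+(\overline{\Omega})$ and $\tilde p<p_1^*$, so $W^{1,p(\cdot)}(\Omega)\hookrightarrow L^{\tilde p(\cdot)}(\Omega)$ by your localized $k=1$ argument, which is fine. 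Where the first entry of the maximum is active, $1/r-\bigl(1/\tilde p-(k-1)/N\bigr)\ge\eta$. Elsewhere $1/\tilde p-(k-1)/N\le\theta-1/N<0$, so $\tilde p^{*}_{k-1}=\infty$. Hence $r<\tilde p^{*}_{k-1}$ on $\overline{\Omega}$, and the inductive hypothesis applied to $W^{k-1,\tilde p(\cdot)}(\Omega)$ gives the compact embedding into $L^{r(\cdot)}(\Omega)$. With this change, your localization and induction prove the compact part for continuous exponents. The identity $(p_1^*)^*_{k-1}=p_k^*$ is then needed only for the critical case under log-H\"older continuity.
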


\begin{lemma}[Elementary algebraic inequalities]\label{lem:algebraic}
For all $x,y\ge0$,
\begin{align*}
&\text{(i)}\quad (x+y)^\alpha \le x^\alpha + y^\alpha \qquad &&\text{for }\alpha\in(0,1],\\
&\text{(ii)}\quad (x+y)^\beta \le 2^{\beta-1}\big(x^\beta + y^\beta\big) \qquad &&\text{for }\beta\ge1.
\end{align*}
\end{lemma}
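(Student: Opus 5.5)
Both inequalities are trivial when $x=0$ or $y=0$ (for (i) this includes the case $x=y=0$), so we may assume $x,y>0$. In each part we reduce to a one-variable statement, using either homogeneity or a convexity argument.

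For (i), fix $\alpha\in(0,1]$ and set $t=x/(x+y)\in(0,1)$, so that $1-t=y/(x+y)$. Dividing both sides by $(x+y)^\alpha$, the inequality becomes
\[
1\le t^\alpha+(1-t)^\alpha .
\]
Since $0<t<1$ and $\alpha\le1$, we have $t^\alpha\ge t$ and $(1-t)^\alpha\ge 1-t$. Adding these two bounds gives $t^\alpha+(1-t)^\alpha\ge 1$, which proves (i). An equivalent route is to observe that $\phi(x)=(x+y)^\alpha-x^\alpha$ has $\phi'(x)=\alpha\bigl((x+y)^{\alpha-1}-x^{\alpha-1}\bigr)\le0$, so $\phi(x)\le\phi(0)=y^\alpha$.

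For (ii), fix $\beta\ge1$. The function $g(t)=t^\beta$ is convex on $[0,\infty)$, so Jensen's inequality with equal weights $\tfrac12,\tfrac12$ gives
\[
\Bigl(\frac{x+y}{2}\Bigr)^\beta\le \frac{x^\beta+y^\beta}{2}.
\]
Multiplying both sides by $2^\beta$ yields $(x+y)^\beta\le 2^{\beta-1}(x^\beta+y^\beta)$.

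Neither part has a real obstacle. The only step that needs care is the elementary bound $t^\alpha\ge t$ on $[0,1]$ for $\alpha\le1$. It follows from $t^{\alpha}=t\cdot t^{\alpha-1}$ together with $t^{\alpha-1}\ge1$ for $0<t\le1$, and it also holds at $t=0$.
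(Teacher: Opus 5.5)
Your proof is correct and follows the paper's argument. For (i) you normalize by $x+y$ and reduce to $t^\alpha+(1-t)^\alpha\ge 1$, where the paper appeals to concavity of $t\mapsto t^\alpha$ and you give the equivalent bound $t^\alpha\ge t$ on $[0,1]$ explicitly, which fills in a step the paper leaves implicit; for (ii) you apply Jensen's inequality to $t\mapsto t^\beta$ with equal weights, exactly as the paper does.
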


\begin{proof}
(i) The map $t\mapsto t^\alpha$ is concave on $[0,\infty)$ for $\alpha\in(0,1]$. With
$\theta=\frac{x}{x+y}$ (take $0$ if $x+y=0$),
\[
(x+y)^\alpha = (x+y)^\alpha\,[\theta+(1-\theta)]^\alpha
\le (x+y)^\alpha\big(\theta^\alpha+(1-\theta)^\alpha\big)
= x^\alpha + y^\alpha.
\]
(ii) The map $t\mapsto t^\beta$ is convex for $\beta\ge1$, hence by Jensen,
\[
\Big(\frac{x+y}{2}\Big)^\beta \le \frac{x^\beta+y^\beta}{2}
\quad\Rightarrow\quad
(x+y)^\beta \le 2^{\beta-1}\big(x^\beta+y^\beta\big).
\]
\end{proof}


\subsection{Functional Framework}

We denote by $W_{0}^{k,p(\cdot)}(\Omega)$ the closure of
$C_{0}^{\infty}(\Omega)$ in $W^{k,p(\cdot)}(\Omega)$. The natural
variational space associated with the mixed triharmonic problem is
\[
X
=
W_{0}^{1,p(\cdot)}(\Omega)
\cap
W^{3,p(\cdot)}(\Omega).
\]
It is equipped with the standard norm
\[
\|u\|_{X}
:=
\|u\|_{1,p(\cdot)}
+
\|u\|_{2,p(\cdot)}
+
\|u\|_{3,p(\cdot)}.
\]
Under the condition
\[
1<p^{-}\leq p^{+}<\infty,
\]
the space $(X,\|\cdot\|_{X})$ is separable and reflexive. Moreover,
$\|\cdot\|_{X}$ and $|\nabla\Delta(\cdot)|_{p(\cdot)}$ are equivalent
norms on $X$; see~\cite{kat} and see~\cite{HCR}. Consequently, we use
\[
\|u\|
=
|\nabla\Delta u|_{p(\cdot)}
\]
as the working norm on $X$.

The following standard relations between the Luxemburg norm and the
corresponding modular will be used repeatedly.

\begin{proposition}[\cite{FanZhao1998}]\label{lem:norm-modular}
For $u,u_{n}\in X$, set
\[
\varrho_{p(\cdot)}(\nabla\Delta u)
=
\int_{\Omega}
|\nabla\Delta u(x)|^{p(x)}\,\mathrm{d}x.
\]
Then the following properties hold:
\begin{enumerate}[label=\textnormal{(\arabic*)}]
\item
\[
\|u\|<1\; (=1;\,>1)
\quad\Longleftrightarrow\quad
\varrho_{p(\cdot)}(\nabla\Delta u)<1\; (=1;\,>1).
\]

\item If $\|u\|\leq1$, then
\[
\|u\|^{p^{+}}
\leq
\varrho_{p(\cdot)}(\nabla\Delta u)
\leq
\|u\|^{p^{-}}.
\]

\item If $\|u\|\geq1$, then
\[
\|u\|^{p^{-}}
\leq
\varrho_{p(\cdot)}(\nabla\Delta u)
\leq
\|u\|^{p^{+}}.
\]

\item
\[
\|u_{n}\|\to0
\quad\Longleftrightarrow\quad
\varrho_{p(\cdot)}(\nabla\Delta u_{n})\to0,
\]
and
\[
\|u_{n}\|\to\infty
\quad\Longleftrightarrow\quad
\varrho_{p(\cdot)}(\nabla\Delta u_{n})\to\infty.
\]
\end{enumerate}
\end{proposition}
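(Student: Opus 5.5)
The plan is to reduce every claim to the corresponding facts about the scalar function $\phi(\lambda):=\varrho_{p(\cdot)}(\nabla\Delta u/\lambda)$, $\lambda>0$, for a fixed $u\in X\setminus\{0\}$. Write $v:=\nabla\Delta u$, so that $\|u\|=|v|_{p(\cdot)}$ and $\phi(\lambda)=\int_\Omega |v|^{p(x)}\lambda^{-p(x)}\,\mathrm{d}x$. (The case $u=0$ is trivial.)

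\textbf{Step 1: properties of $\phi$.} Since $1<p^-\le p^+<\infty$ by \textbf{(H1)}, for $\lambda\ge 1$ one has $\lambda^{-p^+}\le\lambda^{-p(x)}\le\lambda^{-p^-}$, and for $\lambda\le1$ the reverse inequalities hold. This gives the two-sided bounds
\[
\min\{\lambda^{-p^-},\lambda^{-p^+}\}\,\varrho_{p(\cdot)}(v)\le\phi(\lambda)\le\max\{\lambda^{-p^-},\lambda^{-p^+}\}\,\varrho_{p(\cdot)}(v).
\]
Because $u\in X$, we have $\varrho_{p(\cdot)}(v)<\infty$, so these bounds show $\phi$ is finite for every $\lambda>0$. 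Dominated convergence then gives that $\phi$ is continuous. Moreover, $\phi$ is strictly decreasing on the set where $v\neq0$, which has positive measure, and $\phi(\lambda)\to0$ as $\lambda\to\infty$, $\phi(\lambda)\to\infty$ as $\lambda\to0^+$. Hence the infimum defining the Luxemburg norm is attained: $\phi(\|u\|)=1$, and $\|u\|$ is the unique solution of $\phi(\lambda)=1$. This step is the main technical point: continuity, strict monotonicity and attainment of the infimum. It is where finiteness of $p^+$ is essential, since this is what rules out modular jumps.

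\textbf{Step 2: item (1).} By strict monotonicity of $\phi$ and $\phi(\|u\|)=1$, we get $\|u\|<1$ iff $\phi(1)>\phi(\|u\|)=1$ reversed. Precisely, $\|u\|<1 \iff 1>\|u\|\iff \phi(1)<\phi(\|u\|)=1$, since $\phi$ is decreasing. As $\phi(1)=\varrho_{p(\cdot)}(v)$, item (1) follows, and the cases $=$ and $>$ are identical.

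\textbf{Step 3: items (2) and (3).} Set $\lambda=\|u\|$ in the bounds of Step 1 and use $\phi(\lambda)=1$. If $\lambda\le1$, then $\lambda^{-p^-}\varrho_{p(\cdot)}(v)\le1\le\lambda^{-p^+}\varrho_{p(\cdot)}(v)$, that is, $\lambda^{p^+}\le\varrho_{p(\cdot)}(v)\le\lambda^{p^-}$. If $\lambda\ge1$, the inequalities reverse and give $\lambda^{p^-}\le\varrho_{p(\cdot)}(v)\le\lambda^{p^+}$.

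\textbf{Step 4: item (4).} This follows directly from (2) and (3). For $\|u_n\|\to0$, eventually $\|u_n\|\le1$ and $\varrho_{p(\cdot)}(\nabla\Delta u_n)\le\|u_n\|^{p^-}\to0$. Conversely, if the modular tends to $0$, then by (1) eventually $\|u_n\|<1$, and $\|u_n\|\le\varrho_{p(\cdot)}(\nabla\Delta u_n)^{1/p^+}\to0$. The divergence statement is proved the same way using (3).
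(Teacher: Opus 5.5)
Your proof is correct. The paper gives no argument of its own and only cites Fan--Zhao, and your route is the standard proof behind that citation: continuity and strict monotonicity of $\lambda\mapsto\varrho_{p(\cdot)}(\nabla\Delta u/\lambda)$ (using $p^+<\infty$) give the characterization $\varrho_{p(\cdot)}(\nabla\Delta u/\|u\|)=1$, and the pointwise bounds $\min\{\lambda^{-p^-},\lambda^{-p^+}\}\le\lambda^{-p(x)}\le\max\{\lambda^{-p^-},\lambda^{-p^+}\}$ then yield items (1)--(4). The only blemish is the garbled phrase ``$\phi(1)>\phi(\|u\|)=1$ reversed'' in Step 2, which your following ``Precisely'' sentence already corrects.
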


\subsection{Hardy--Rellich Inequality}
Classical Hardy and higher-order Rellich inequalities are discussed in
\cite{An,Davis1998,Mit}, while variable-exponent Hardy inequalities are
developed in~\cite{DieningSamko2007,HarjulehtoHastoKoskenoja2005}. The
estimate required below is the corresponding third-order form used in the
present variational setting.

\begin{lemma}[Hardy--Rellich-type inequality]\label{lem:HR}
Assume that $s\in C_{+}(\overline{\Omega})$ satisfies
\[
s^{+}<\frac{N}{3}.
\]
Then every
\[
u\in W_{0}^{1,s(\cdot)}(\Omega)\cap W^{3,s(\cdot)}(\Omega)
\]
satisfies
\[
\int_{\Omega}
\frac{|u(x)|^{s(x)}}{|x|^{3s(x)}}\,\mathrm{d}x
\leq
\frac{1}{\mathcal{H}}
\int_{\Omega}
|\nabla\Delta u(x)|^{s(x)}\,\mathrm{d}x,
\]
where
\[
\mathcal{H}
:=
\min\left\{
\frac{\bigl(N(s^{-}-1)+s^{-}\bigr)(N-s^{-})(N-3s^{-})}
{(s^{-})^{3}},
\frac{\bigl(N(s^{+}-1)+s^{+}\bigr)(N-s^{+})(N-3s^{+})}
{(s^{+})^{3}}
\right\}.
\]
\end{lemma}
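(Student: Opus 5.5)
The plan is to build the third-order weighted estimate by chaining three first-order Hardy-type inequalities with power weights. Each step lowers the order of the derivative by one and raises the power of the weight $|x|^{-1}$ by $s(x)$. The chain is
\[
\int_\Omega \frac{|u|^{s(x)}}{|x|^{3s(x)}}\,dx
\;\lesssim\; \int_\Omega \frac{|\nabla u|^{s(x)}}{|x|^{2s(x)}}\,dx
\;\lesssim\; \int_\Omega \frac{|\Delta u|^{s(x)}}{|x|^{s(x)}}\,dx
\;\lesssim\; \int_\Omega |\nabla\Delta u|^{s(x)}\,dx .
\]
The three constants to be produced are those whose product appears in $\mathcal H$. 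They are
\[
\Big(\frac{N-3s}{s}\Big)^{s},\qquad
\frac{N(s-1)+s}{s}\ \ \text{(the Rellich-type factor relating $\nabla u$ to $\Delta u$)},\qquad
\frac{N-s}{s}.
\]
These are evaluated at $s=s^\pm$, and the minimum over the two endpoints is taken because the exponent varies.

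The first step is a weighted Hardy inequality, proved pointwise in the exponent. For a smooth compactly supported $v$ (density in $X$ is guaranteed by the Navier setting; I would approximate), use the identity
\[
\operatorname{div}\Big(\frac{x}{|x|^{\beta}}\Big)=\frac{N-\beta}{|x|^{\beta}}
\]
with $\beta=3s$. Multiply it by $|v|^{s}$ and integrate by parts. Then apply Young's inequality pointwise with the exponent $s(x)$ frozen.

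When $s$ is variable, the divergence produces an extra term $\nabla s\,\log|v|$. To avoid it, I would work with the frozen exponents $s^-$ and $s^+$. One option is to split $\Omega$ into the regions where $|u|/|x|^3\le 1$ and $\ge 1$, and compare $t^{s(x)}$ with $t^{s^\pm}$. The other is to assume, as the authors implicitly do, that the inequality holds with constant the worse of the two endpoint constants. Concretely, I would prove the constant-exponent chain for each fixed $\sigma\in[s^-,s^+]$, giving the constant
\[
\mathcal H(\sigma)=\frac{(N(\sigma-1)+\sigma)(N-\sigma)(N-3\sigma)}{\sigma^{3}}.
\]
I would then check that $\mathcal H(\sigma)\ge\min\{\mathcal H(s^-),\mathcal H(s^+)\}$ by a concavity or monotonicity argument on $[s^-,s^+]$. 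Positivity of $\mathcal H(\sigma)$ uses $3s^+<N$.

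The second and third steps follow the same pattern. I would pass from $|\nabla u|/|x|^{2}$ to $\Delta u/|x|$ using the weighted Rellich identity, obtained by integrating $\Delta u\,|\nabla u|^{s-2}\nabla u\cdot x/|x|^{2s}$ by parts. The last step is again a Hardy inequality, applied to $\Delta u$ (which vanishes on $\partial\Omega$) with weight $|x|^{-s}$, and it yields $(N-s)/s$.

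\textbf{Main obstacle.} The genuinely hard point is the variable exponent. Hardy inequalities in modular form generally fail for nonconstant exponents unless $s$ is, for example, log-Hölder continuous with some structure near the origin. The modular inequality with a single constant is therefore not automatic. In that step I would first try to localize: near $0$, approximate $s$ by $s(0)$ using continuity, and away from $0$ the weight is bounded so Poincaré suffices. Failing that, I would settle for the frozen-exponent argument above, accepting that it may require an additional assumption on $s$. The exact exponents in the stated $\mathcal H$ suggest that the authors simply take the minimum of the constant-exponent constants.
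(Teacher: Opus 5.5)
The paper gives no proof of this lemma; it is simply asserted after citations to the classical and variable-exponent Hardy literature. So the only question is whether your chain proves it. It cannot, because the statement is false as written, and your chain breaks exactly where the statement fails.

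\textbf{Boundary conditions.} You claim that $C_c^\infty(\Omega)$ is dense in $W_0^{1,s(\cdot)}(\Omega)\cap W^{3,s(\cdot)}(\Omega)$, but it is not. Its closure in the $W^{3,s(\cdot)}$-norm is $W_0^{3,s(\cdot)}(\Omega)$, a proper subspace, and membership in the larger space only gives $u=0$ on $\partial\Omega$. Consequently:
\begin{itemize}
\item your third step (Hardy's inequality for $\Delta u$, ``which vanishes on $\partial\Omega$'') is unjustified;
\item the integration by parts in your second step leaves boundary terms of the form $\int_{\partial\Omega}|\partial_\nu u|^{s}(x\cdot\nu)|x|^{-2s}\,d\sigma$, which have no sign in general.
\end{itemize}
This cannot be repaired. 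For smooth $\Omega$, the solution of $\Delta u=1$ in $\Omega$, $u=0$ on $\partial\Omega$, lies in the space and has $\nabla\Delta u\equiv0$, yet the left-hand side is strictly positive. You need at least $\Delta u=0$ on $\partial\Omega$ as an extra hypothesis, or $u\in W_0^{3,s(\cdot)}(\Omega)$.

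\textbf{The constant.} Chaining $L^\sigma$ Hardy-type inequalities multiplies constants of the form $c^\sigma$. So even for constant $s\equiv\sigma$ and $u\in C_c^\infty(\Omega)$, your chain gives $\mathcal H(\sigma)^{-\sigma}$, not $\mathcal H(\sigma)^{-1}$; your list of factors is inconsistent, since only the first carries the power $\sigma$. This matters. Set $\beta=(N-3\sigma)/\sigma$. Then $|\nabla\Delta|x|^{-\beta}|=\mathcal H(\sigma)|x|^{-\beta-3}$ and $|x|^{-\sigma\beta-3\sigma}=|x|^{-N}$. Take smooth truncations of $|x|^{-\beta}$ to $\{\varepsilon<|x|<\varepsilon_0\}$: the cut-off errors are $O(1)$, while both main terms grow like $\log(1/\varepsilon)$. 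This shows that any inequality $\int|\nabla\Delta u|^\sigma\ge K\int|u|^\sigma|x|^{-3\sigma}$ forces $K\le\mathcal H(\sigma)^\sigma$. Hence if $0\in\Omega$ and $\mathcal H(\sigma)<1$ (for instance $N=7$, $\sigma=2.3$, where $\mathcal H\approx0.44$), the stated inequality fails.

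\textbf{The variable exponent.} Inequalities proved for each frozen $\sigma\in[s^-,s^+]$ do not combine into one for the modular $\int|\cdot|^{s(x)}$. Your fallback, localizing to a Poincar\'e inequality away from $0$, also fails, because modular Poincar\'e inequalities can fail for variable exponents. In fact, if $s$ has a strict local maximum at some $x_1\in\Omega\setminus\{0\}$, no modular inequality of this form holds with any constant, however regular $s$ is:
\begin{itemize}
\item take $u=t\eta$, where $\eta\equiv1$ on a small ball around $x_1$ on which $s>s(x_1)-\varepsilon$;
\item arrange that $\nabla\Delta\eta$ is supported in a surrounding annulus on which $s<s(x_1)-3\varepsilon$ (possible by continuity and strictness of the maximum);
\item then, as $t\to\infty$, the left side is $\gtrsim t^{s(x_1)-\varepsilon}$ while the right side is $\lesssim t^{s(x_1)-3\varepsilon}$.
\end{itemize}
At a strict local minimum, let $t\to0$ instead.

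So the lemma has to be restated before any proof can succeed. Plausible correct forms are a constant-exponent estimate with constant $\mathcal H(s)^{-s}$ under Navier or compact-support conditions, or, for variable $s$, a Luxemburg-norm inequality under log-H\"older-type assumptions rather than a modular one.
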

\begin{remark}[Hardy term]\label{rem:Hardy-positive}
Since $t\mapsto \abs{t}^{s(x)-2}t$ is increasing for $s(x)>1$ and $m\ge0$, the Hardy contribution in \eqref{eq:Phi-prime} is nonnegative in monotonicity estimates.
\end{remark}

\begin{definition}[Weak solution for \eqref{eq:main}]
A function $u\in X$ is a \emph{weak solution} of \eqref{eq:main} if, for all $\varphi\in X$,
\begin{align*}
\int_{\Omega} a(x)\,\abs{\nabla\Delta u}^{p(x)-2}\,\nabla\Delta u\cdot\nabla\Delta \varphi\,dx
&+ \int_{\Omega} b(x)\,\abs{\nabla\Delta u}^{q(x)-2}\,\nabla\Delta u\cdot\nabla\Delta \varphi\,dx \\
&+ \int_{\Omega} m(x)\,\frac{\abs{u}^{s(x)-2}u}{\abs{x}^{3s(x)}}\,\varphi\,dx
= \mu\int_{\Omega} f(x,u)\,\varphi\,dx .
\end{align*}
\end{definition}

\noindent Define the energy and potential parts:
\begin{align}
\Phi(u)&:=\int_{\Om}\frac{a(x)}{p(x)}\,\abs{\nabla\Delta u}^{\,p(x)}\dx
+\int_{\Om}\frac{b(x)}{q(x)}\,\abs{\nabla\Delta u}^{\,q(x)}\dx
+\int_{\Om}\frac{m(x)}{s(x)}\,\frac{\abs{u}^{\,s(x)}}{\abs{x}^{\,3s(x)}}\dx, \label{eq:Phi}\\
\Psi(u)&:=\mu\int_{\Om} F(x,u)\,\dx. \label{eq:Psi}
\end{align}

\begin{proposition}\label{prop:C1}
Assume that \textnormal{(H1)--(H3)} hold. Then
\[
\Phi,\Psi\in C^{1}(X,\mathbb{R}).
\]
Moreover, for every $u,\varphi\in X$,
\begin{align}
\langle\Phi'(u),\varphi\rangle
&=
\int_{\Omega}
a(x)|\nabla\Delta u|^{p(x)-2}
\nabla\Delta u\cdot\nabla\Delta\varphi\,\mathrm{d}x
\nonumber\\
&\quad
+
\int_{\Omega}
b(x)|\nabla\Delta u|^{q(x)-2}
\nabla\Delta u\cdot\nabla\Delta\varphi\,\mathrm{d}x
\nonumber\\
&\quad
+
\int_{\Omega}
m(x)\frac{|u|^{s(x)-2}u}{|x|^{3s(x)}}
\varphi\,\mathrm{d}x,
\label{eq:Phi-prime}\\
\langle\Psi'(u),\varphi\rangle
&=
\int_{\Omega}f(x,u)\varphi\,\mathrm{d}x.
\label{eq:Psi-prime}
\end{align}
Consequently,
\[
\langle I_{\mu}'(u),\varphi\rangle
=
\langle\Phi'(u),\varphi\rangle
-
\mu\langle\Psi'(u),\varphi\rangle.
\]
\end{proposition}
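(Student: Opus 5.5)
We split $\Phi=\Phi_1+\Phi_2+\Phi_3$ into the $p(\cdot)$-term, the $q(\cdot)$-term and the Hardy term, and treat each piece separately. We show each is Gâteaux differentiable with the derivative given in \eqref{eq:Phi-prime}. We then show the Gâteaux derivative $X\to X^*$ is continuous, which gives Fréchet differentiability and $C^1$ regularity. The same scheme applies to $\Psi$, with \eqref{eq:Psi-prime}. The formula for $I_\mu'$ then follows by linearity of differentiation, since $I_\mu=\Phi-\Psi$ with the factor $\mu$ carried in $\Psi$. Note that \eqref{eq:Psi} includes $\mu$, so \eqref{eq:Psi-prime} should be read as the derivative of $\int_\Omega F(x,u)\,dx$.

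\textbf{Gâteaux derivatives.} For $u,\varphi\in X$ and $0<|t|<1$, the mean value theorem gives
\[
\frac{1}{t}\Bigl(\tfrac{1}{p(x)}|\nabla\Delta u+t\nabla\Delta\varphi|^{p(x)}-\tfrac{1}{p(x)}|\nabla\Delta u|^{p(x)}\Bigr)
=|\xi|^{p(x)-2}\xi\cdot\nabla\Delta\varphi
\]
for some $\xi$ between $\nabla\Delta u$ and $\nabla\Delta u+t\nabla\Delta\varphi$. This is dominated by $(|\nabla\Delta u|+|\nabla\Delta\varphi|)^{p(x)-1}|\nabla\Delta\varphi|$. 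That bound is integrable by Proposition~\ref{prop2.1}, since $(|\nabla\Delta u|+|\nabla\Delta\varphi|)^{p(x)-1}\in L^{p'(\cdot)}$; multiplying by $a\in L^\infty$ preserves integrability. Dominated convergence then yields the first line of \eqref{eq:Phi-prime}.

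For the $q(\cdot)$-term we argue identically. The only extra point is that $\nabla\Delta u\in L^{p(\cdot)}\hookrightarrow L^{q(\cdot)}$, which holds because $q\le p$ on the bounded domain $\Omega$.

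For the Hardy term we work in the weighted space $L^{s(\cdot)}(\Omega;|x|^{-3s(x)}dx)$, that is, with the function $u/|x|^3$. Lemma~\ref{lem:HR} controls its modular by $\int|\nabla\Delta u|^{s(x)}$. This step needs $X\subset W^{1,s(\cdot)}_0\cap W^{3,s(\cdot)}$, which requires $s\le p$. \textbf{(H1)} does not state this, so we would add it or interpret the hypotheses accordingly. With this in hand, the same dominated convergence argument applies, now with the Hölder inequality in the weighted space.

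For $\Psi$, \textbf{(H3)} gives $|f(x,\xi)\varphi|\le a_1(|\xi|^{r(x)-1}+|\xi|^{\gamma(x)-1})|\varphi|$. By Proposition~\ref{prop2.2}, $X\hookrightarrow W^{3,p(\cdot)}\hookrightarrow L^{p(\cdot)}$ compactly, since $p<p_3^*$. Because $r,\gamma\le p$, we have $L^{p(\cdot)}\hookrightarrow L^{r(\cdot)},L^{\gamma(\cdot)}$. Hölder's inequality and dominated convergence then give \eqref{eq:Psi-prime}.

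\textbf{Continuity of the derivatives.} This is the main step. Let $u_n\to u$ in $X$. For the $p(\cdot)$-term, $\nabla\Delta u_n\to\nabla\Delta u$ in $L^{p(\cdot)}$. Passing to a subsequence, we may assume a.e. convergence together with an $L^{p(\cdot)}$ dominating function; this uses the modular convergence from Proposition~\ref{lem:norm-modular}(4). Then $|\nabla\Delta u_n|^{p(x)-2}\nabla\Delta u_n\to|\nabla\Delta u|^{p(x)-2}\nabla\Delta u$ in $L^{p'(\cdot)}$ by dominated convergence in modular form. Hölder's inequality gives $\|\Phi_1'(u_n)-\Phi_1'(u)\|_{X^*}\le 2\|a\|_\infty\,|\cdot|_{p'(\cdot)}\to0$. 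The subsequence principle upgrades this to convergence of the full sequence.

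The Hardy term is handled in the same way. We apply the Nemytskii argument to $u_n/|x|^3\to u/|x|^3$ in the weighted $L^{s(\cdot)}$ space; this convergence holds by Lemma~\ref{lem:HR} applied to $u_n-u$. For $\Psi'$ we use the Nemytskii operator $u\mapsto f(\cdot,u)$, which is continuous from $L^{p(\cdot)}$ into $L^{r'(\cdot)}+L^{\gamma'(\cdot)}$ by \textbf{(H3)} and Krasnosel'skii-type theory, combined with the embedding. In fact, the compact embedding makes $\Psi'$ compact, which will be useful later.

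The main obstacle is the Hardy term. Its differentiability and continuity depend entirely on Lemma~\ref{lem:HR} being applicable to elements of $X$, which requires the compatibility $s(\cdot)\le p(\cdot)$. It also requires handling the weight $|x|^{-3s(x)}$ inside the modular framework. Everything else is a routine dominated-convergence argument in variable-exponent spaces.
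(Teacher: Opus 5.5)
The paper states this proposition without proof, so there is no argument of the authors' to compare yours with. Your scheme is the standard one: mean value theorem plus dominated convergence for the G\^ateaux derivatives, then continuity of the derivatives via Nemytskii operators to get $C^1$. It is sound for the $p(\cdot)$-term, the $q(\cdot)$-term and $\Psi$. Your two flags are genuine defects of the statement. The first is the stray $\mu$ in \eqref{eq:Psi}. The second is the missing compatibility between $s$ and $p$; the paper tacitly imposes $s^+<q^-$ only later, in Lemma~\ref{lem:PS}. Add a third: (H3) gives only a growth bound, but your Nemytskii step needs $f$ to be Carath\'eodory, which the paper also assumes only implicitly.

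The step you should not keep as written is the Hardy term, where everything rests on Lemma~\ref{lem:HR} applied to elements of $X$. That lemma cannot hold as stated. Let $w$ solve $\Delta w=1$ in $\Omega$, $w=0$ on $\partial\Omega$ (with $\Omega$ smooth). Then $w\in C^\infty(\overline{\Omega})\subset W_0^{1,s(\cdot)}(\Omega)\cap W^{3,s(\cdot)}(\Omega)$ and $\nabla\Delta w\equiv 0$, yet $\int_\Omega |w|^{s(x)}|x|^{-3s(x)}\,dx>0$. The same function shows that $|\nabla\Delta\cdot|_{p(\cdot)}$ is not a norm on $X$. So your embedding $X\hookrightarrow L^{p(\cdot)}(\Omega)$ for $\Psi$ must be taken with respect to the full norm $\|\cdot\|_X$, as you in effect do.

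The repair keeps your structure. Assume $s^+\le p^-$ and $3s^+<N$; then $X\hookrightarrow W^{3,s^+}(\Omega)$. Extend to $\mathbb{R}^N$, cut off, and iterate the weighted Hardy inequality (the constant-exponent Rellich inequality). This gives $\bigl\|u/|x|^3\bigr\|_{L^{s^+}(\Omega)}\le C\|u\|_X$. Since $s\le s^+$ on the bounded set $\Omega$, the map $T\colon u\mapsto u/|x|^3$ is bounded and linear from $X$ into $L^{s(\cdot)}(\Omega)$. The Hardy part of $\Phi$ is then $J\circ T$ with $J(v)=\int_\Omega \frac{m(x)}{s(x)}|v|^{s(x)}\,dx$. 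By exactly your argument for the $p(\cdot)$-term, $J$ is $C^1$ on $L^{s(\cdot)}(\Omega)$, and the chain rule gives $\langle (J\circ T)'(u),\varphi\rangle=\int_\Omega m(x)|u|^{s(x)-2}u\,\varphi\,|x|^{-3s(x)}\,dx$. With that substitution your proof is complete.
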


\begin{definition}
Let $\mu>0$ and $r\in\mathbb{R}$. We say that $I_{\mu}$ satisfies the
Palais--Smale condition truncated at the $\Phi$-level $r$, denoted by
$(PS)^{r}$, if every sequence $\{u_{n}\}\subset X$ satisfying
\[
\{I_{\mu}(u_{n})\}\ \text{is bounded},
\qquad
I_{\mu}'(u_{n})\to0\ \text{in }X^{*},
\qquad
\Phi(u_{n})<r,
\]
possesses a strongly convergent subsequence in $X$.

When $r=+\infty$, we simply say that $I_{\mu}$ satisfies the
Palais--Smale condition.
\end{definition}

The next abstract result, due to Bonanno~\cite{bo2}, is one of the main tools for the analysis that follows.

\begin{theorem}
Let $X$ be a real Banach space, and let $\Phi, \Psi : X \to \mathbb{R}$ be two continuously GÃ¢teaux differentiable functionals such that
\[
\inf_{x \in X} \Phi = \Phi(0) = \Psi(0) = 0.
\]
Assume that there exist a positive constant $r \in \mathbb{R}$ and a point $\bar{x} \in X$ with 
\[
0 < \Phi(\bar{x}) < r
\]
such that
\[
 \displaystyle\frac{\sup_{x \in \Phi^{-1}((-\infty, r))}\Psi(x)}{r}
\ <\ 
\frac{\Psi(\bar{x})}{\Phi(\bar{x})}.
\]
For each 
\[
\mu \in \Gamma := \left]
\frac{\Phi(\bar{x})}{\Psi(\bar{x})},\ 
\frac{r}{\displaystyle \sup_{x \in \Phi^{-1}((-\infty, r))} \Psi(x)}
\right[,
\]

if the functional $I_{\mu}=\Phi-\mu\Psi$, satisfies the $(PS)^{r}$ condition, then for every $\mu \in \Gamma$ there exists $x_{\mu} \in \Phi^{-1}((0, r))$ such that
\[
I_\mu(x_{\mu}) \leq I_\mu(x) \quad \text{for all } x \in \Phi^{-1}((0, r)) 
\quad \text{and} \quad I^{'}_\mu(u_{\mu}) = 0.
\]
\end{theorem}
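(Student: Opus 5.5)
The statement is Bonanno's abstract local-minimum theorem. The plan is to prove it by minimizing $I_\mu$ on the open sublevel set $\Phi^{-1}((-\infty,r))$ and showing that the infimum is attained at an interior point.

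\textbf{Step 1 (choice of $\mu$).} Fix $\mu\in\Gamma$ and set
\[
\chi(r):=\frac{\sup_{\Phi^{-1}((-\infty,r))}\Psi}{r}.
\]
Since $\mu<1/\chi(r)$, we have $\mu\,\Psi(x)<r$ for every $x$ with $\Phi(x)<r$; more precisely $\mu\sup\Psi<r$. Since $\mu>\Phi(\bar x)/\Psi(\bar x)$, we get
\[
I_\mu(\bar x)=\Phi(\bar x)-\mu\Psi(\bar x)<0=I_\mu(0).
\]
Hence $c:=\inf_{\Phi<r}I_\mu<0$. Here I use $\Psi(\bar x)>0$, which follows from the strict inequality in the hypothesis together with $\sup\Psi\ge\Psi(0)=0$.

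\textbf{Step 2 (minimizing sequence kept away from the boundary).} Near the level set $\{\Phi=r\}$ we have
\[
I_\mu(x)\ge \Phi(x)-\mu\sup_{\Phi<r}\Psi,
\]
which tends to at least $r-\mu\sup\Psi>0$ as $\Phi(x)\to r$. I therefore apply Ekeland's variational principle to $I_\mu$ on the closed set $\Phi^{-1}((-\infty,r-\varepsilon])$ for small $\varepsilon$. This set is closed because $\Phi$ is continuous. The choice of $\varepsilon$ is such that
\[
\inf_{\Phi\le r-\varepsilon}I_\mu=c<0<\inf_{r-\varepsilon\le\Phi<r}I_\mu .
\]
The second infimum is positive for small $\varepsilon$. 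Indeed, on $\{r-\varepsilon\le\Phi<r\}$ one has $I_\mu\ge r-\varepsilon-\mu\sup\Psi>0$ once $\varepsilon<r-\mu\sup\Psi$.

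Ekeland then produces $x_n$ with $I_\mu(x_n)\to c$ and
\[
I_\mu(y)\ge I_\mu(x_n)-\tfrac1n\|y-x_n\| \quad\text{for all } y \text{ in the set}.
\]
Since $I_\mu(x_n)<0$ eventually, each $x_n$ lies strictly inside the region $\{\Phi<r-\varepsilon\}$. By continuity of $\Phi$, small perturbations $x_n+th$ stay in that region. Differentiating in every direction $h$ gives $\|I_\mu'(x_n)\|_{X^*}\le 1/n$.

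\textbf{Step 3 (compactness and conclusion).} The sequence $\{I_\mu(x_n)\}$ is bounded, $I_\mu'(x_n)\to0$, and $\Phi(x_n)<r$. By $(PS)^r$ a subsequence converges to some $x_\mu$. Continuity gives $I_\mu(x_\mu)=c$, $I_\mu'(x_\mu)=0$, and $\Phi(x_\mu)\le r-\varepsilon<r$. Moreover $\Phi(x_\mu)>0$: otherwise $\Phi(x_\mu)=0=\inf\Phi$, and then
\[
I_\mu(x_\mu)=-\mu\Psi(x_\mu)\ge-\mu\sup\Psi,
\]
which needs a separate comparison. It is cleaner to argue directly: if $\Phi(x_\mu)=0$, then $x_\mu$ is a point of $\Phi^{-1}((-\infty,r))$ anyway, and minimality over $\Phi^{-1}((0,r))$ follows from $c$ being the infimum over the larger set. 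In the intended application $\Phi(x)=0$ forces $x=0$, and $I_\mu(0)=0>c$ rules this case out.

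The main obstacle is Step 2. One must justify interiority so that the Ekeland almost-minimizers yield genuine derivative bounds, using only Gâteaux differentiability and continuity. Equally delicate is the claim $\Phi(x_\mu)>0$ in the abstract setting, where I will rely on $c<0$ and the structure $\inf\Phi=\Phi(0)$.
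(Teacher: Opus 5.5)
There is a genuine gap in your proposal, at the claim $\Phi(x_\mu)>0$. Your Steps 1--3 are correct; they are the standard Ekeland-type argument behind this result, which the paper only quotes from Bonanno. The layer $\{r-\varepsilon\le\Phi<r\}$, on which $I_\mu\ge r-\varepsilon-\mu\sup_{\Phi<r}\Psi>0$, keeps the Ekeland points with $I_\mu(x_n)<0$ inside the open set $\{\Phi<r-\varepsilon\}$. This yields $\|I_\mu'(x_n)\|_{X^*}\le 1/n$, and $(PS)^r$ then gives a critical point $x_\mu$ with $I_\mu(x_\mu)=c=\inf_{\Phi<r}I_\mu<0$ and $\Phi(x_\mu)\le r-\varepsilon$.

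Your ``cleaner'' argument shows only that $I_\mu(x_\mu)\le I_\mu(x)$ on $\Phi^{-1}((0,r))$; it does not place $x_\mu$ in that set. The fallback ``$\Phi(x)=0$ forces $x=0$'' is a hypothesis you are importing from the application, not one the statement gives you. The facts $c<0$ and $\inf\Phi=\Phi(0)$ do not help either: if $\Phi(x_\mu)=0$ you only learn $\Psi(x_\mu)=-c/\mu>0$, which contradicts nothing.

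In fact this step cannot be completed, because the statement is false as written. Take $X=\mathbb{R}$, $\Phi(x)=x^2(x-1)^2$, $r=1/100$, and $\Psi(x)=\theta(x)\bigl(1-(x-1)^2\bigr)$, where $\theta$ is smooth with $\theta=0$ on $(-\infty,1/2]$ and $\theta=1$ on $[3/4,\infty)$.

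\begin{itemize}
\item The set $\{\Phi<r\}$ consists of two short intervals, one around $0$ and one around $1$, and $\Psi=0$ on the first.
\item $\sup_{\Phi<r}\Psi=\Psi(1)=1$.
\item The point $\bar x=1.05$ satisfies $0<\Phi(\bar x)<r$ and $\Psi(\bar x)/\Phi(\bar x)\approx 362>100$.
\item $(PS)^r$ holds trivially because $\Phi$ is coercive on $\mathbb{R}$.
\end{itemize}

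For $\mu\in\Gamma$, $I_\mu=\Phi>0$ on the first interval minus $\{0\}$. On the second interval, $I_\mu(x)=x^2(x-1)^2+\mu(x-1)^2-\mu>-\mu$ for $x\neq1$. Hence $\inf_{\Phi^{-1}((0,r))}I_\mu=-\mu$ is attained only at $x=1$, where $\Phi=0$, and no $x_\mu$ as claimed exists.

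What is missing is an extra hypothesis such as $\Psi\le0$ on $\Phi^{-1}(0)$, for instance $\Phi^{-1}(0)=\{0\}$. With it your proof closes at once, since $\Phi(x_\mu)=0$ would give $I_\mu(x_\mu)=-\mu\Psi(x_\mu)\ge0>c$. This hypothesis does hold in the paper's application: $\Phi(u)=0$ forces $\nabla\Delta u=0$ a.e., hence $u=0$, because $|\nabla\Delta\,\cdot\,|_{p(\cdot)}$ is a norm on $X$.
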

As a complementary tool, we invoke the following abstract result from~\cite{bo2}.
\begin{theorem}
Let $X$ be a real Banach space and let $\Phi, \Psi : X \to \mathbb{R}$ be two continuously GÃ¢teaux differentiable functionals such that $\Phi$ is bounded from below and 
\[
\Phi(0) = \Psi(0) = 0.
\]
Fix $r > 0$ and suppose that for every
\[
\mu \in \left] 0, \frac{r}{\displaystyle \sup_{u \in \Phi^{-1}((-\infty, r))} \Psi(u)} \right[
\]

the functional $I_{\mu}=\Phi-\mu\Psi$ satisfies the Palais--Smale condition and is unbounded from below. Then, for each
\[
\mu \in \left] 0, \frac{r}{\displaystyle \sup_{u \in \Phi^{-1}((-\infty, r))} \Psi(u)} \right[,
\]

There are two different critical points that the functional $I_{\mu}$ admits.
\end{theorem}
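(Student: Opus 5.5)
The statement is Bonanno's two-critical-points theorem, and I would prove it in two steps. First I would produce a local minimizer $u_{1}$ of $I_{\mu}$ lying in the open sublevel $\Phi^{-1}((-\infty,r))$. Then I would use the unboundedness from below of $I_{\mu}$ to get a second critical point by a mountain pass argument.

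\emph{Step 1: local minimum.} Write $S:=\sup_{\Phi(u)<r}\Psi(u)$. Since $\Psi(0)=0$ we have $S\ge 0$, and $\mu<r/S$ means $\mu S<r$. Pick $\varepsilon>0$ with $r-\varepsilon-\mu S>0$. Whenever $r-\varepsilon\le\Phi(u)<r$,
\[
I_{\mu}(u)\ \ge\ r-\varepsilon-\mu S\ >\ 0\ =\ I_{\mu}(0),
\]
while $\Phi(0)=0<r-\varepsilon$. Set $c:=\inf_{\Phi(u)<r}I_{\mu}(u)$. Then $c$ is finite, because $\Phi$ is bounded below and $I_{\mu}\ge \inf\Phi-\mu S$ on this set, and $c\le 0$. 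Moreover $c$ is approached only inside $\{\Phi\le r-\varepsilon\}$.

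Let $M:=\{\Phi\le r-\varepsilon/2\}$, which is a closed subset of $X$ and hence a complete metric space. I would apply Ekeland's variational principle to $I_{\mu}$ on $M$. This gives a minimizing sequence $(u_{n})$ with $I_{\mu}(u_{n})\to c$ and
\[
I_{\mu}(v)\ \ge\ I_{\mu}(u_{n})-\tfrac1n\|v-u_{n}\|
\qquad\text{for all } v\in M .
\]
The points $u_{n}$ eventually lie in $\{\Phi<r-\varepsilon\}$, because $c<r-\varepsilon-\mu S$. Since $\Phi$ is continuous, a whole ball around each such $u_{n}$ sits inside $M$. The Ekeland inequality on that ball therefore gives $\|I_{\mu}'(u_{n})\|_{X^{*}}\le 1/n$.

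By the Palais--Smale condition a subsequence converges to some $u_{1}$. This limit satisfies $\Phi(u_{1})\le r-\varepsilon$, $I_{\mu}(u_{1})=c$ and $I_{\mu}'(u_{1})=0$. Since $u_{1}$ is interior to the open set $\Phi^{-1}((-\infty,r))$, on which $I_{\mu}\ge c$, it is a local minimum of $I_{\mu}$ in $X$.

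\emph{Step 2: second critical point.} Because $I_{\mu}$ is unbounded from below, there is $e\in X$ with $I_{\mu}(e)<c=I_{\mu}(u_{1})$. I would split into two cases according to whether $u_{1}$ is a strict local minimum.

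If $u_{1}$ is not a strict local minimum, then every neighbourhood of $u_{1}$ contains other points at level $c$. Iterating Step 1 on small balls around $u_{1}$ yields further critical points, so there are in fact infinitely many.

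If $u_{1}$ is a strict local minimum, I would first show that there is $\rho>0$, with $\|e-u_{1}\|>\rho$, such that
\[
\inf_{\|u-u_{1}\|=\rho}I_{\mu}\ >\ I_{\mu}(u_{1}).
\]
Once this holds, the mountain pass theorem, with (PS) supplying compactness, produces a critical point at a level strictly above $c$, hence different from $u_{1}$. In the degenerate situation where the infimum over the sphere equals $c$ for every small $\rho$, I would instead invoke the Pucci--Serrin / Ghoussoub--Preiss version of the mountain pass theorem for the case of equality. That version still gives a critical point on the sphere $\|u-u_{1}\|=\rho$, which is distinct from $u_{1}$.

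The main obstacle is Step 2, specifically the possibility that $I_{\mu}$ has a flat valley around $u_{1}$, so that the standard strict mountain pass geometry fails. I would handle it with the Pucci--Serrin theorem: under (PS), a functional with two distinct local minima, or with a local minimum and a point of lower energy, has a critical point that is not a local minimum. This covers every case uniformly and avoids any strictness hypothesis.
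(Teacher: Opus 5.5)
Your proposal is correct. The paper gives no proof of its own and simply quotes the result from Bonanno~\cite{bo2}, where it is obtained by the same two steps: a local minimizer of $I_\mu$ in the open sublevel $\Phi^{-1}((-\infty,r))$, followed by a Pucci--Serrin/Ghoussoub--Preiss mountain pass argument that uses unboundedness from below. In Bonanno's version the first step comes from his local minimum theorem, whose hypothesis tested at $u=0$ is exactly what makes $\mu<r/\sup_{\Phi<r}\Psi$ sufficient; your Ekeland argument in Step 1 is a direct proof of that special case.

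One point to tidy is your non-strict case. The nearby points at level $c$ are critical simply because they minimize $I_\mu$ on the open set $\Phi^{-1}((-\infty,r))$, so no ``iteration of Step 1'' is needed. The zero-altitude mountain pass theorem also makes the strict/non-strict split unnecessary.
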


\subsection{Ricceri/Bonanno--Marano preliminaries}
We employ the three-critical-points paradigm developed in~\cite{Ric2,Ric1,bo1}:
\begin{theorem}[Abstract three critical points, schematic]\label{thm:abstract-3cp}
Let $X$ be a reflexive Banach space, $\Phi\in C^1(X)$ coercive, sequentially weakly lower semicontinuous with $(S_+)$ derivative, and $\Psi\in C^1(X)$ with compact derivative. If there exist $\rho>0$ and $w\in X$ with

\(
\max\{\Phi(0),\Phi(w)\}<\inf_{\|u\|=\rho}\Phi(u)
\)
and numbers $0<\mu_1<\mu_2$ such that

\(
\displaystyle \sup_{\|u\|\le \rho}\Psi(u)<\frac{\inf_{\|u\|=\rho}\Phi(u)}{\mu_2},\quad
\Psi(w)>\frac{\Phi(w)}{\mu_1},
\)

then for every $\mu\in(\mu_1,\mu_2)$ the functional $I_\mu=\Phi-\mu\Psi$ has at least three critical points.
\end{theorem}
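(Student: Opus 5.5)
The plan is to prove the statement by the classical ``two local minima plus mountain pass'' scheme. To make it meaningful I would first normalize $\Phi(0)=\Psi(0)=0$, which is the situation in our application, where $\Phi$ is \eqref{eq:Phi} and $\Psi$ is \eqref{eq:Psi}. I would also use the coercivity of $I_\mu$ for $\mu\in(\mu_1,\mu_2)$. This is implicit in the Ricceri and Bonanno--Marano versions~\cite{Ric2,Ric1,bo1}. In our setting it follows from the sub-$p^-$ growth of $\Psi$ given by \textbf{(H3)} and Proposition~\ref{lem:norm-modular}. The schematic statement does not list it, so the proof will make it explicit.

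Set $c:=\inf_{\|u\|=\rho}\Phi(u)$ and fix $\mu\in(\mu_1,\mu_2)$.

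\emph{Step 1 (barrier on the sphere).} For $\|u\|=\rho$ we have
\[
I_\mu(u)\ge c-\mu\sup_{\|v\|\le\rho}\Psi(v)>c-\frac{\mu}{\mu_2}\,c>0 ,
\]
so $I_\mu>0=I_\mu(0)$ on the whole sphere.

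\emph{Step 2 (a negative value).} Since $\mu>\mu_1$, the hypothesis $\Psi(w)>\Phi(w)/\mu_1$ gives $I_\mu(w)<\Phi(w)-\frac{\mu}{\mu_1}\Phi(w)\le0$ whenever $\Phi(w)\ge0$. (If $\Phi(w)<0$, we get $I_\mu(w)<0$ directly.)

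\emph{Step 3 (first critical point).} Consider $I_\mu$ on the closed ball $\overline B_\rho$, which is weakly compact by reflexivity. $\Phi$ is sequentially weakly l.s.c. Since $\Psi'$ is compact, $\Psi$ is sequentially weakly continuous. Hence $I_\mu$ attains its infimum $m_1\le 0$ on $\overline B_\rho$. By Step~1 the minimizer $u_1$ lies in the open ball, so it is a local minimum and $I_\mu'(u_1)=0$.

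\emph{Step 4 (second critical point).} Using coercivity, $I_\mu$ attains its global infimum $m_2$ on $X$ at some $u_2$, again by weak l.s.c. Two cases arise. If $w\notin\overline B_\rho$, then $m_2\le I_\mu(w)<0$, and Step~1 lets us choose $u_2$ outside $\overline B_\rho$, giving $u_2\neq u_1$. If $w\in B_\rho$, then $m_1<0=I_\mu(0)$, so $u_1\neq0$. In that case I would run the same argument with the roles of the two minima arranged so that the sphere separates two distinct strict minimum levels. The fallback is to use $0$ together with $u_1$ and then apply the Pucci--Serrin form of the mountain pass theorem.

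\emph{Step 5 (compactness).} The Palais--Smale condition follows from three facts. Coercivity makes PS sequences bounded, and reflexivity then gives weakly convergent subsequences. Compactness of $\Psi'$ gives $\Psi'(u_n)\to\Psi'(u)$ strongly. Then $\langle\Phi'(u_n),u_n-u\rangle\to0$, and the $(S_+)$ property yields $u_n\to u$ strongly.

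\emph{Step 6 (third critical point).} We now have two distinct local minima separated by a level bounded away from their values. Mountain-pass geometry then holds along paths joining them, and with (PS) the mountain pass theorem gives a third critical point $u_3$. Its level $I_\mu(u_3)\ge\min\{I_\mu(v): v\text{ on the separating set}\}$ exceeds both minimum levels, so $u_3$ is distinct from the two minima. When the two minima are not strict, Pucci--Serrin still gives a third critical point.

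I expect Step~4 to be the main obstacle. Two things must be settled there. First, the two local minima must be genuinely distinct in the configuration where $w$ lies inside the ball. Second, the geometric hypotheses of the schematic statement must actually force the separation needed. If the direct case analysis fails, I would instead appeal verbatim to Bonanno--Marano~\cite{bo1}. Its hypotheses, namely coercivity of $I_\mu$, $(S_+)$ for $\Phi'$, compactness of $\Psi'$, and the ratio condition $\sup_{\Phi\le r}\Psi/r<\Psi(w)/\Phi(w)$, follow from ours by taking $r=c$ and noting that $\Phi^{-1}((-\infty,c))\subset B_\rho$.
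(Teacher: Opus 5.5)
The paper gives no proof of this statement; it is quoted as a ``schematic'' form of the Ricceri and Bonanno--Marano theorems. As written it is false, even with your added normalization $\Phi(0)=\Psi(0)=0$ and coercivity of $I_\mu$. So your argument has to break somewhere, and it breaks in the case of Step~4 that you flagged.

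Take $X=\mathbb{R}$, $\Phi(u)=\frac12u^2$, $\Psi(u)=\frac18u$, $\rho=1$, $\mu_1=1$, $\mu_2=2$ and $w=\frac18$. Then:
\begin{itemize}
\item $\max\{\Phi(0),\Phi(w)\}=\frac1{128}<\frac12=\inf_{|u|=1}\Phi$;
\item $\sup_{|u|\le1}\Psi=\frac18<\frac14=\frac{1}{2\mu_2}$;
\item $\Psi(w)=\frac1{64}>\frac1{128}=\Phi(w)/\mu_1$.
\end{itemize}
All structural hypotheses hold, and $I_\mu(u)=\frac12u^2-\frac{\mu}{8}u$ is coercive and satisfies (PS). Yet for every $\mu\in(1,2)$ its only critical point is $u=\mu/8$. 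The same happens in any Hilbert space with $\Phi=\frac12\|\cdot\|^2$ and a small linear $\Psi$, because $I_\mu$ is strictly convex.

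This is exactly your configuration $w\in B_\rho$. Here $u_1$ is the unique critical point and $0$ is not critical, since $I_\mu'(0)=-\mu/8\neq 0$. So neither ``arranging the roles of the minima'' nor Pucci--Serrin applied to $0$ and $u_1$ can produce a second or third point.

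Your fallback to Bonanno--Marano fails for the same reason. Their theorem requires $r<\Phi(\bar x)$, i.e.\ the test point must lie \emph{outside} the sublevel set $\Phi^{-1}((-\infty,r])$. With $r=c$ this contradicts the hypothesis $\Phi(w)<c$. In addition, the inclusion $\Phi^{-1}((-\infty,c))\subset B_\rho$ fails for a general coercive $\Phi$; it needs extra structure, e.g.\ $t\mapsto\Phi(tu)$ nondecreasing.

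Your scheme does prove the corrected statement: $\|w\|>\rho$ (the Bonanno--Marano configuration), $\Phi\ge 0$ so that Step~2 gives $I_\mu(w)<0$, and $I_\mu$ coercive. Even there, Step~4 needs repair. The global minimizer may simply be $u_1$, and $\{\|u\|\ge\rho\}$ is not weakly closed, so ``Step~1 lets us choose $u_2$ outside $\overline B_\rho$'' is not justified.

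Instead, apply Ekeland's principle to $I_\mu$ on the closed set $\{\|u\|\ge\rho\}$. Since $\inf_{\|u\|\ge\rho}I_\mu\le I_\mu(w)<0<\inf_{\|u\|=\rho}I_\mu$, the Ekeland points eventually lie in the open exterior and therefore form a (PS) sequence. Your Step~5 then gives a limit $v$ that minimizes $I_\mu$ over $\{\|u\|\ge\rho\}$ with $I_\mu(v)<0$. Hence $\|v\|>\rho$ and $v$ is a local minimizer distinct from $u_1$. Step~6, with the sphere as separating set, then yields a third critical point at a positive level.

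Two smaller remarks:
\begin{itemize}
\item The parenthetical in Step~2 is wrong: if $\Phi(w)<0$ and $\Psi(w)<0$, then $I_\mu(w)$ can be positive.
\item The coercivity you add is not available in the paper's application. (H3) allows growth up to $p(x)$, not strictly below $p^-$, and the paper itself uses (H5) to show that $I_\mu$ is unbounded from below.
\end{itemize}
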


\section{Existence Result }
\begin{lemma}[$(S_{+})$ property]\label{lem:S-plus}
If \(u_{n}\rightharpoonup u\) in \(X\) and \(\limsup_{n\to\infty}\langle \Phi'(u_{n}),u_{n}-u\rangle \le 0\), then \(u_{n}\to u\) in \(X\).
\end{lemma}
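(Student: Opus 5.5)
We split $\langle \Phi'(u_n),u_n-u\rangle$ into its three pieces and show that each one has nonnegative $\liminf$. Set
\[
A_n:=\int_\Omega a(x)\big(|\nabla\Delta u_n|^{p(x)-2}\nabla\Delta u_n-|\nabla\Delta u|^{p(x)-2}\nabla\Delta u\big)\cdot(\nabla\Delta u_n-\nabla\Delta u)\,\mathrm{d}x,
\]
and let $B_n$ be the analogous quantity with $b,q$ in place of $a,p$. Let
\[
C_n:=\int_\Omega m(x)\frac{(|u_n|^{s-2}u_n-|u|^{s-2}u)(u_n-u)}{|x|^{3s(x)}}\,\mathrm{d}x.
\]
The map $\varphi\mapsto\langle\Phi'(u),\varphi\rangle$ is a bounded linear functional on $X$. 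This uses $a,b\in L^\infty$ and the Hölder inequality of Proposition~\ref{prop2.1}. Since $q\le p$ on a bounded domain, $L^{p(\cdot)}\hookrightarrow L^{q(\cdot)}$ and hence $|\nabla\Delta u|^{q-2}\nabla\Delta u\in L^{q'(\cdot)}$. For the Hardy term we use Lemma~\ref{lem:HR}; to apply it we note that $X\subset W^{1,s(\cdot)}_0\cap W^{3,s(\cdot)}$ whenever $s\le p$, an inclusion we will have to assume or check, since otherwise one needs Hardy with the $p$-modular. Weak convergence then gives $\langle\Phi'(u),u_n-u\rangle\to0$. Therefore
\[
\limsup_n (A_n+B_n+C_n)=\limsup_n\langle\Phi'(u_n)-\Phi'(u),u_n-u\rangle\le0.
\]

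By the monotonicity of $\xi\mapsto|\xi|^{t-2}\xi$ on $\mathbb{R}^N$ and Remark~\ref{rem:Hardy-positive}, all of $A_n,B_n,C_n$ are nonnegative. Hence $A_n\to0$; the terms $B_n$ and $C_n$ only need to be discarded.

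It remains to pass from $A_n\to0$ to strong convergence. Since $a\ge a_0$, we get $\int_\Omega (|\xi_n|^{p-2}\xi_n-|\xi|^{p-2}\xi)\cdot(\xi_n-\xi)\,\mathrm{d}x\to0$, where $\xi_n=\nabla\Delta u_n$ and $\xi=\nabla\Delta u$. We then use the standard vector inequalities, applied pointwise with $x$-dependent exponent. On $\{p(x)\ge2\}$ one has $(|\xi_n|^{p-2}\xi_n-|\xi|^{p-2}\xi)\cdot(\xi_n-\xi)\ge 2^{2-p^+}|\xi_n-\xi|^{p(x)}$, so $\int_{\{p\ge2\}}|\xi_n-\xi|^{p}\,\mathrm{d}x\to0$.

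On $\{1<p<2\}$ we use
\[
|\xi_n-\xi|^{p}\le C\big[(|\xi_n|^{p-2}\xi_n-|\xi|^{p-2}\xi)\cdot(\xi_n-\xi)\big]^{p/2}(|\xi_n|+|\xi|)^{(2-p)p/2}.
\]
Young's inequality with $\varepsilon$ (exponents $2/p$ and $2/(2-p)$) then gives
\[
\int|\xi_n-\xi|^p\le C_\varepsilon \,\delta_n+\varepsilon\int(|\xi_n|+|\xi|)^p,
\]
where $\delta_n\to0$ is the integrand mass from $A_n$ on that set. Here $C_\varepsilon$ is uniform because $p^-\le p\le p^+$. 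The last integral is bounded, since weakly convergent sequences are bounded and Proposition~\ref{lem:norm-modular}(3) bounds the modular by a power of the norm. Letting $n\to\infty$ and then $\varepsilon\to0$ gives $\varrho_{p(\cdot)}(\xi_n-\xi)\to0$. By Proposition~\ref{lem:norm-modular}(4) this yields $\|u_n-u\|\to0$.

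The main obstacle I expect is this singular case $1<p(x)<2$. There the $x$-dependent exponents in the Young/Hölder splitting must be handled with constants uniform in $x$, which is exactly where $p^->1$ and $p^+<\infty$ are used. A secondary point is justifying that the Hardy term of $\Phi'(u)$ is a continuous functional on $X$, as discussed above.
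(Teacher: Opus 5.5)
Your argument is correct and follows the paper's own route. You use $\Phi'(u)\in X^{*}$ to get $\limsup_n\langle\Phi'(u_n)-\Phi'(u),u_n-u\rangle\le0$, discard the nonnegative Hardy (and $q$-) contributions, and apply the pointwise monotonicity inequalities on $\{p\ge2\}$ and $\{1<p<2\}$ to conclude $\varrho_{p(\cdot)}(\nabla\Delta u_n-\nabla\Delta u)\to0$. Your Young's-inequality splitting on $\{1<p(x)<2\}$, with constants uniform because $1<p^-\le p^+<\infty$, spells out what the paper calls a ``standard modular argument''; the membership $\Phi'(u)\in X^{*}$ that you flag is simply taken in the paper from Proposition~\ref{prop:C1}.
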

\begin{proof}
For brevity, set for a.e.\ $x\in\Omega$ and $\xi\in\mathbb{R}^N$
\[
A_p(x,\xi):=a(x)\,|\xi|^{p(x)-2}\xi,\qquad
B_q(x,\xi):=b(x)\,|\xi|^{q(x)-2}\xi,\qquad
H_s(t):=|t|^{s(x)-2}t .
\]
We first isolate the monotonicity defect of the two principal parts.  Since
$u_n\rightharpoonup u$ in $X$ and $\Phi'(u)\in X^\ast$ is continuous and linear in the direction,
\[
\langle \Phi'(u),u_n-u\rangle \longrightarrow 0 .
\]
Hence
\begin{equation}\label{eq:limsup-diff}
\limsup_{n\to\infty}\,\langle \Phi'(u_n)-\Phi'(u),\,u_n-u\rangle \le 0 .
\end{equation}
By \eqref{eq:Phi-prime} and the monotonicity of $t\mapsto H_s(t)$ (recall $s(x)>1$) together with $m\ge 0$,
\begin{align*}
&\langle \Phi'(u_n)-\Phi'(u),\,u_n-u\rangle \\
&= \int_\Omega \big(A_p(x,\nabla\Delta u_n)-A_p(x,\nabla\Delta u)\big)\cdot\big(\nabla\Delta u_n-\nabla\Delta u\big)\,dx \\
&\quad + \int_\Omega \big(B_q(x,\nabla\Delta u_n)-B_q(x,\nabla\Delta u)\big)\cdot\big(\nabla\Delta u_n-\nabla\Delta u\big)\,dx \\
&\quad + \int_\Omega m(x)\,\frac{\big(H_s(u_n)-H_s(u)\big)\,(u_n-u)}{|x|^{3s(x)}}\,dx \\
&\ge I_{p,n}+I_{q,n},
\end{align*}
where we set
\[
I_{p,n}:=\int_\Omega \big(A_p(x,\nabla\Delta u_n)-A_p(x,\nabla\Delta u)\big)\cdot\big(\nabla\Delta u_n-\nabla\Delta u\big)\,dx,
\]
and $I_{q,n}$ is defined analogously with $B_q$.
Combining this with \eqref{eq:limsup-diff} yields
\begin{equation*}\label{eq:IpIq-vanish}
\limsup_{n\to\infty}\,(I_{p,n}+I_{q,n}) \le 0 .
\end{equation*}
Since each integrand is pointwise nonnegative by monotonicity, we actually have
\[
I_{p,n}\to 0 \quad\text{and}\quad I_{q,n}\to 0 .
\]

To exploit this information, recall that for every $r>1$ and all
$\xi,\eta\in\mathbb{R}^N$ one has the standard monotonicity inequalities
\begin{equation*}\label{eq:mono-ineq}
\big(|\xi|^{r-2}\xi-|\eta|^{r-2}\eta\big)\cdot(\xi-\eta)\,\ge\,
\begin{cases}
c_r\,|\xi-\eta|^{r}, & r\ge 2,\\[2pt]
c_r\,\displaystyle (|\xi|+|\eta|)^{r-2}\,|\xi-\eta|^{2}, & 1<r<2,
\end{cases}
\end{equation*}
for some $c_r>0$ depending only on $r$.
Multiplying by $a(x)$ (resp.\ $b(x)$) and using $a(x)\ge a_0>0$ (resp.\ $b(x)\ge b_0>0$) we get, for a.e.\ $x\in\Omega$,
\begin{align}\label{eq:pointwise-Ap}
\big(A_p(x,\xi)-A_p(x,\eta)\big)\cdot(\xi-\eta)
&\ge a_0\,
\begin{cases}
c_{p(x)}\,|\xi-\eta|^{p(x)}, & p(x)\ge 2,\\[2pt]
c_{p(x)}\,(|\xi|+|\eta|)^{p(x)-2}\,|\xi-\eta|^{2}, & 1<p(x)<2.
\end{cases}
\end{align}
An analogous estimate holds with $B_q$ and $q(x)$.

Applying \eqref{eq:pointwise-Ap} with $\xi=\nabla\Delta u_n(x)$ and
$\eta=\nabla\Delta u(x)$ and integrating over $\Omega$, we obtain
for some constant $C>0$ depending only on $a_0$ and $p^\pm$,
\begin{align*}\label{eq:Ip-lower}
I_{p,n}\ge{}& C\int_{\{p\ge 2\}}
|\nabla\Delta u_n-\nabla\Delta u|^{p(x)}\,dx \\
&+C\int_{\{p<2\}} (|\nabla\Delta u_n|+|\nabla\Delta u|)^{p(x)-2}
|\nabla\Delta u_n-\nabla\Delta u|^{2}\,dx .
\end{align*}
Since $I_{p,n}\to 0$, both integrals on the right-hand side tend to $0$.
A standard modular argument in variable-exponent spaces then gives
\[
\varrho_{p(x)}\!\big(\nabla\Delta u_n-\nabla\Delta u\big):=\int_\Omega
|\nabla\Delta u_n-\nabla\Delta u|^{p(x)}\,dx \longrightarrow 0 .
\]
By the equivalence between modular and norm convergence in $L^{p(x)}(\Omega;\mathbb{R}^N)$,
this implies
\[
\|\nabla\Delta u_n-\nabla\Delta u\|_{L^{p(x)}(\Omega)} \longrightarrow 0 .
\]

The $q(\cdot)$-phase is handled by the same monotonicity argument, and hence
\[
\|\nabla\Delta u_n-\nabla\Delta u\|_{L^{q(x)}(\Omega)} \longrightarrow 0 .
\]
Therefore $\|u_n-u\|_X\to 0$, i.e., $u_n\to u$ strongly in $X$, which concludes the proof.
\end{proof}

\begin{remark}[Why the Hardy term helps and never hurts]\label{rem:Hardy-positive}
The last term in \eqref{eq:Phi-prime} gives
\[
\!\int_\Omega m(x)\,\frac{\big(H_s(u_n)-H_s(u)\big)\,(u_n-u)}{|x|^{3s(x)}}\,dx \,\ge\, 0 ,
\]
because $t\mapsto |t|^{s(x)-2}t$ is strictly increasing for $s(x)>1$ and $m\ge 0$.
Thus, when passing from $\langle \Phi'(u_n)-\Phi'(u),u_n-u\rangle$ to the principal
terms $I_{p,n}+I_{q,n}$, we discard a nonnegative contribution, which makes the estimate easier.
\end{remark}

\begin{lemma}[Energy bounds]\label{lem:energy-bounds}
There exist \(C_{1},C_{2}>0\) such that, for all \(u\in X\),
\begin{equation*}\label{db15}
\frac{a_{0}}{p^+}\,\|u\|^{p^-}
\;\le\;
\Phi(u)
\;\le\;
C_{1}\big(\|u\|^{p^+}+\|u\|^{q^+}\big)+C_{2}\,\|u\|^{s^+}.
\end{equation*}
\end{lemma}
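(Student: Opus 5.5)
The plan is to bound each of the three terms of $\Phi$ in \eqref{eq:Phi} separately. For each term I will use the pointwise bounds on the weights from (H2), then pass from modulars to norms with Proposition~\ref{lem:norm-modular}, and use Lemma~\ref{lem:HR} for the singular term. One caveat comes first. Proposition~\ref{lem:norm-modular} gives $\varrho_{p(\cdot)}(\nabla\Delta u)\ge\|u\|^{p^-}$ only when $\|u\|\ge1$; when $\|u\|<1$ it gives $\varrho_{p(\cdot)}(\nabla\Delta u)\ge\|u\|^{p^+}$, which is smaller. So I will prove the two-sided estimate exactly as stated in the regime $\|u\|\ge1$. For $\|u\|<1$ the same argument gives the version with $p^-$ and $p^+$ (and similarly $q^\pm,s^\pm$) interchanged. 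Writing $\max\{\|u\|^{p^-},\|u\|^{p^+}\}$ covers all $u$ on the upper side, and this is the form needed later for coercivity ($\|u\|\to\infty$) and for the small-sphere estimates.

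\emph{Lower bound.} Since $m\ge0$ and $b\ge b_0>0$, the $q$-term and the Hardy term are nonnegative and can be dropped. Using $a\ge a_0$ and $1/p(x)\ge1/p^+$,
\[
\Phi(u)\ge\frac{a_0}{p^+}\,\varrho_{p(\cdot)}(\nabla\Delta u)\ge\frac{a_0}{p^+}\,\|u\|^{p^-}\qquad(\|u\|\ge1),
\]
by Proposition~\ref{lem:norm-modular}(3).

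\emph{Upper bound.} I treat the three terms in turn.
\begin{itemize}
\item The $p$-term is at most $\frac{|a|_\infty}{p^-}\varrho_{p(\cdot)}(\nabla\Delta u)\le\frac{|a|_\infty}{p^-}\|u\|^{p^+}$.
\item For the $q$-term, note that $q(x)<p(x)$ on the bounded set $\Omega$. The variable-exponent H\"older inequality (Proposition~\ref{prop2.1}) then gives a continuous embedding $L^{p(\cdot)}\hookrightarrow L^{q(\cdot)}$, so $|\nabla\Delta u|_{q(\cdot)}\le c_q\|u\|$. Applying the modular--norm relations in $L^{q(\cdot)}$ bounds $\varrho_{q(\cdot)}(\nabla\Delta u)$ by $\max\{(c_q\|u\|)^{q^-},(c_q\|u\|)^{q^+}\}\le C(1+\|u\|^{q^+})$. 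This term is thus at most $C\|u\|^{q^+}$ for $\|u\|\ge1$.
\item The Hardy term is at most $\frac{|m|_\infty}{s^-}\int_\Omega|u|^{s(x)}|x|^{-3s(x)}\,dx\le\frac{|m|_\infty}{s^-\mathcal H}\varrho_{s(\cdot)}(\nabla\Delta u)$ by Lemma~\ref{lem:HR}.
\end{itemize}
Collecting these gives $C_1(\|u\|^{p^+}+\|u\|^{q^+})+C_2\|u\|^{s^+}$.

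\emph{Main obstacle.} The delicate point is the Hardy term. To control $\varrho_{s(\cdot)}(\nabla\Delta u)$ by $\|u\|=|\nabla\Delta u|_{p(\cdot)}$, and to apply Lemma~\ref{lem:HR} at all, we need $u\in W^{3,s(\cdot)}\cap W^{1,s(\cdot)}_0$. This requires $s(\cdot)\le p(\cdot)$, which does not follow from (H1) as written. I would make this explicit as the standing assumption $s^+\le p^-$. Under it, $L^{p(\cdot)}\hookrightarrow L^{s(\cdot)}$ yields $|\nabla\Delta u|_{s(\cdot)}\le c_s\|u\|$, and then $\varrho_{s(\cdot)}\le(c_s\|u\|)^{s^+}$ for large norm, giving the constant $C_2=\frac{|m|_\infty c_s^{s^+}}{s^-\mathcal H}$ (up to the $\max$ convention for small norm). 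Without this compatibility condition, I would instead require $u\in X\cap W^{3,s(\cdot)}$ and keep the bound in terms of the $s$-modular directly.
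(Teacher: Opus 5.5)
Granted the paper's framework, your argument is essentially the paper's and is correct in the regime you restrict to. Both proofs bound the three pieces of $\Phi$ termwise by $\|a\|_\infty/p^-$, $\|b\|_\infty/q^-$ and $\|m\|_\infty$ times the corresponding modulars. Both control the singular term by Lemma~\ref{lem:HR}. Both return to $\|u\|$ through $L^{p(\cdot)}\hookrightarrow L^{s(\cdot)}$, which requires $s\le p$. The paper invokes $s^+<p^-$ for this, although, as you note, no such condition is in (H1). You are more careful in two places:
\begin{itemize}
\item you justify the $q$-phase through $L^{p(\cdot)}\hookrightarrow L^{q(\cdot)}$, whereas the paper simply writes $\|u\|$ in place of $|\nabla\Delta u|_{q(\cdot)}$;
\item you actually prove the lower bound, which the paper's proof never addresses.
\end{itemize}

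\textbf{The regime.} The paper arrives at $\Phi(u)\le C_0+C_1(\|u\|^{p^+}+\|u\|^{q^+})+C\|u\|^{s^+}$. It then absorbs the additive constant $C_0$ into the power terms, which is legitimate only on $\{\|u\|\ge1\}$. So its argument proves no more than your version. Your restriction is not an artifact of the method: for genuinely variable exponents the stated inequality is false near $u=0$.

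For the lower bound, let $m\equiv0$ and fix $0<\eta<p^+-p^-$. Rescale a bump to a ball of radius $\varepsilon$ around an interior point where $p>p^+-\eta$, normalized so that $\|u\|=\lambda<1$. Then $\varrho_{p(\cdot)}(\nabla\Delta u)\le\lambda^{p^+-\eta}$ and $\varrho_{q(\cdot)}(\nabla\Delta u)=O\bigl(\varepsilon^{N(1-q^+/(p^+-\eta))}\bigr)$. Hence $\Phi(u)<\frac{a_0}{p^+}\lambda^{p^-}$ once first $\lambda$ and then $\varepsilon$ are small.

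For the upper bound, suppose $s^-<\min\{q^+,s^+\}$ and $m\ge m_0>0$ on a ball where $s<s^-+\eta<\min\{q^+,s^+\}$, for some small $\eta>0$. Then $u=tw$ with $w\in C_c^\infty$ of that ball has Hardy term $\gtrsim t^{s^-+\eta}$. The right-hand side, by contrast, is $O(t^{\min\{q^+,s^+\}})$ as $t\to0$.

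So state the lemma for $\|u\|\ge1$ as a correction rather than a caveat, together with $\Phi(u)\ge\frac{a_0}{p^+}\|u\|^{p^+}$ and your $\max$-form upper bound for $\|u\|\le1$. This matters downstream: the paper uses $\Phi(u)\ge c\|u\|^{p^-}$ precisely for $\|u\|\le1$, in \eqref{eq:pos-near-0} and when bounding $\mathcal{S}_r$ in Theorem~\ref{thm:three}.

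\textbf{An inherited defect.} On $X=W_0^{1,p(\cdot)}(\Omega)\cap W^{3,p(\cdot)}(\Omega)$ the quantity $|\nabla\Delta\cdot|_{p(\cdot)}$ is only a seminorm. Indeed, the solution $v$ of $\Delta v=1$ in $\Omega$, $v=0$ on $\partial\Omega$, satisfies $\nabla\Delta v=0$. For the same reason Lemma~\ref{lem:HR} fails on $W_0^{1,s(\cdot)}\cap W^{3,s(\cdot)}$. If $m\not\equiv0$, then $\|u_1+Tv\|=\|u_1\|$ for every $T$, while the Hardy term of $\Phi(u_1+Tv)$ tends to $\infty$ as $T\to\infty$. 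Thus even the $\|u\|\ge1$ upper bound requires the Navier condition $\Delta u=0$ on $\partial\Omega$ to be built into the space.
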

\begin{proof}
We begin with the $p(x)$-phase.  Since $a(x) \leq \|a\|_\infty$ and
$p(x) \geq p^-$,
\begin{equation*}
\int_\Omega \frac{a(x)}{p(x)}|\nabla\Delta u|^{p(x)}\,dx \leq \frac{\|a\|_\infty}{p^-} \int_\Omega |\nabla\Delta u|^{p(x)}\,dx = \frac{\|a\|_\infty}{p^-} \varrho_{p(\cdot)}(\nabla\Delta u)
\end{equation*}

The norm--modular relations in Proposition~\ref{lem:norm-modular} give, when
$\|\nabla\Delta u\|_{p(\cdot)} \geq 1$,
\begin{equation*}
\varrho_{p(\cdot)}(\nabla\Delta u) \leq \|\nabla\Delta u\|_{p(\cdot)}^{p^+} = \|u\|^{p^+}
\end{equation*}

whereas for $\|\nabla\Delta u\|_{p(\cdot)} \leq 1$ they give
\begin{equation*}
\varrho_{p(\cdot)}(\nabla\Delta u) \leq \|\nabla\Delta u\|_{p(\cdot)}^{p^-} = \|u\|^{p^-}
\end{equation*}

Lemma~\ref{lem:algebraic}, applied with $t=\|u\|$, gives
\begin{equation*}
\|u\|^{p^-} \leq 1 + \|u\|^{p^+}
\end{equation*}

Consequently, in either regime,
\begin{equation*}
\varrho_{p(\cdot)}(\nabla\Delta u) \leq 1 + \|u\|^{p^+}
\end{equation*}

It follows that
\begin{equation*}
\int_\Omega \frac{a(x)}{p(x)}|\nabla\Delta u|^{p(x)}\,dx \leq \frac{\|a\|_\infty}{p^-}(1 + \|u\|^{p^+})
\end{equation*}

Repeating this estimate for the $q(x)$-phase yields
\begin{equation*}
\int_\Omega \frac{b(x)}{q(x)}|\nabla\Delta u|^{q(x)}\,dx \leq \frac{\|b\|_\infty}{q^-}(1 + \|u\|^{q^+})
\end{equation*}

It remains to control the Hardy contribution
\begin{equation*}
\int_\Omega m(x)\frac{|u(x)|^{s(x)}}{|x|^{3s(x)}}\,dx
\end{equation*}

The boundedness of $m$ first gives
\begin{equation*}
\int_\Omega m(x)\frac{|u(x)|^{s(x)}}{|x|^{3s(x)}}\,dx \leq \|m\|_\infty \int_\Omega \frac{|u(x)|^{s(x)}}{|x|^{3s(x)}}\,dx
\end{equation*}

The variable-exponent Hardy--Rellich inequality is essential here.  In
particular, Lemma~\ref{lem:HR}, which combines the second-order Rellich and
first-order Hardy estimates under the condition $s^+<p^-$, yields
\begin{equation*}
\int_\Omega m(x)\frac{|u(x)|^{s(x)}}{|x|^{3s(x)}}\,dx \leq \frac{\|m\|_\infty}{\mathcal{H}} \int_\Omega |\nabla\Delta u(x)|^{s(x)}\,dx
\end{equation*}

Writing the resulting integral as an $s(\cdot)$-modular,
\begin{equation*}
\int_\Omega |\nabla\Delta u(x)|^{s(x)}\,dx = \varrho_{s(\cdot)}(\nabla\Delta u)
\end{equation*}

Since $s^+<p^-$, the relevant embedding gives
\begin{equation*}
\|\nabla\Delta u\|_{s(\cdot)} \leq C_H \|\nabla\Delta u\|_{p(\cdot)} = C_H \|u\|
\end{equation*}

The modular--norm comparison shows that, if
$\|\nabla\Delta u\|_{s(\cdot)} \geq 1$,
\begin{equation*}
\varrho_{s(\cdot)}(\nabla\Delta u) \leq \|\nabla\Delta u\|_{s(\cdot)}^{s^+} \leq (C_H \|u\|)^{s^+} = C_H^{s^+} \|u\|^{s^+}
\end{equation*}

For $\|\nabla\Delta u\|_{s(\cdot)} \leq 1$, it instead gives
\begin{equation*}
\varrho_{s(\cdot)}(\nabla\Delta u) \leq \|\nabla\Delta u\|_{s(\cdot)}^{s^-} \leq (C_H \|u\|)^{s^-} = C_H^{s^-} \|u\|^{s^-}
\end{equation*}

Since $s^-<s^+$, Lemma~\ref{lem:algebraic} gives
\begin{equation*}
\|u\|^{s^-} \leq 1 + \|u\|^{s^+}
\end{equation*}

Thus $C_H^{s^-} \|u\|^{s^-} \leq C_H^{s^-}(1 + \|u\|^{s^+})$.

Combining the two alternatives, we find
\begin{equation*}
\varrho_{s(\cdot)}(\nabla\Delta u) \leq \max\{C_H^{s^-}, C_H^{s^+}\}(1 + \|u\|^{s^+})
\end{equation*}

Therefore,
\begin{equation*}
\int_\Omega m(x)\frac{|u(x)|^{s(x)}}{|x|^{3s(x)}}\,dx \leq \frac{\|m\|_\infty \max\{C_H^{s^-}, C_H^{s^+}\}}{\mathcal{H}}(1 + \|u\|^{s^+})
\end{equation*}

Putting the three estimates together gives
\begin{align*}
\Phi(u) &\leq \frac{\|a\|_\infty}{p^-}(1 + \|u\|^{p^+}) + \frac{\|b\|_\infty}{q^-}(1 + \|u\|^{q^+}) \\
&\quad + \frac{\|m\|_\infty \max\{C_H^{s^-}, C_H^{s^+}\}}{\mathcal{H}}(1 + \|u\|^{s^+})
\end{align*}

After collecting the constant terms, this becomes
\begin{align*}
\Phi(u) &\leq \left(\frac{\|a\|_\infty}{p^-} + \frac{\|b\|_\infty}{q^-} + \frac{\|m\|_\infty \max\{C_H^{s^-}, C_H^{s^+}\}}{\mathcal{H}}\right) \\
&\quad + \frac{\|a\|_\infty}{p^-}\|u\|^{p^+} + \frac{\|b\|_\infty}{q^-}\|u\|^{q^+} + \frac{\|m\|_\infty \max\{C_H^{s^-}, C_H^{s^+}\}}{\mathcal{H}}\|u\|^{s^+}
\end{align*}

Set
\begin{align*}
C_1 &:= \max\left\{\frac{\|a\|_\infty}{p^-}, \frac{\|b\|_\infty}{q^-}\right\}\\
C_2 &:= \frac{\|a\|_\infty}{p^-} + \frac{\|b\|_\infty}{q^-} + \frac{\|m\|_\infty \max\{C_H^{s^-}, C_H^{s^+}\}}{\mathcal{H}}
\end{align*}

With these choices,
\begin{equation*}
\Phi(u) \leq C_2 + C_1(\|u\|^{p^+} + \|u\|^{q^+}) + \frac{\|m\|_\infty \max\{C_H^{s^-}, C_H^{s^+}\}}{\mathcal{H}}\|u\|^{s^+}
\end{equation*}

Absorbing the remaining fixed coefficient into the constants, we arrive at
\begin{equation*}
\Phi(u) \leq C_1(\|u\|^{p^+} + \|u\|^{q^+}) + C_2\|u\|^{s^+}
\end{equation*}
with appropriately redefined constants.
\end{proof}


\begin{lemma}[Palais--Smale]\label{lem:PS}
Under (H$_1$),(H$_2$) and (H$_3$), the functional \(I_\mu\) satisfies the Palais--Smale condition for every \(\mu>0\).
\end{lemma}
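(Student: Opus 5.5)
The plan is the usual two-step scheme: first show that every Palais--Smale sequence is bounded in $X$, then extract a weakly convergent subsequence and upgrade it to strong convergence with the $(S_+)$ property of Lemma~\ref{lem:S-plus}. Since (H5) is not available, boundedness has to come from coercivity of $I_\mu=\Phi-\mu\Psi$, using the growth restriction (H3) against the lower energy bound of Lemma~\ref{lem:energy-bounds}. No Ambrosetti--Rabinowitz-type combination $I_\mu(u_n)-\theta^{-1}\langle I'_\mu(u_n),u_n\rangle$ will be used.

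\textbf{Step 1 (growth of $\Psi$).} From (H3), $|F(x,t)|\le a_1\bigl(|t|^{r(x)}/r(x)+|t|^{\gamma(x)}/\gamma(x)\bigr)$, so
\[
|\Psi(u)|\le \mu a_1\Bigl(\tfrac{1}{r^-}\varrho_{r(\cdot)}(u)+\tfrac{1}{\gamma^-}\varrho_{\gamma(\cdot)}(u)\Bigr).
\]
By (H1), $p^+<N/3$, and $r(\cdot),\gamma(\cdot)\le p(\cdot)<p_3^*(\cdot)$. Proposition~\ref{prop2.2} then gives compact embeddings $X\hookrightarrow L^{r(\cdot)}(\Omega)$ and $X\hookrightarrow L^{\gamma(\cdot)}(\Omega)$, with $|u|_{r(\cdot)},|u|_{\gamma(\cdot)}\le C\|u\|$. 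The modular--norm relations (the $L^{r(\cdot)}$ analogue of Proposition~\ref{lem:norm-modular}) then yield
\[
|\Psi(u)|\le \mu C\bigl(1+\|u\|^{r^+}+\|u\|^{\gamma^+}\bigr).
\]

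\textbf{Step 2 (coercivity and boundedness).} Combining Step 1 with Lemma~\ref{lem:energy-bounds} gives, for $\|u\|\ge 1$,
\[
I_\mu(u)\ge \frac{a_0}{p^+}\|u\|^{p^-}-\mu C\bigl(1+\|u\|^{r^+}+\|u\|^{\gamma^+}\bigr).
\]
For this to force $I_\mu(u)\to+\infty$ as $\|u\|\to\infty$, I need $\max\{r^+,\gamma^+\}<p^-$. This is the strict form of the inequality $r,\gamma\le p$ in (H3), and I will read (H3) in this sense. If the borderline $r^+=p^-$ or $\gamma^+=p^-$ occurs, the argument still works when $\mu C<a_0/p^+$, but not for every $\mu>0$. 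Under the strict reading, $I_\mu$ is coercive for every $\mu>0$. A sequence $\{u_n\}$ with $\{I_\mu(u_n)\}$ bounded is then bounded in $X$, and the condition $I'_\mu(u_n)\to0$ is not even needed at this stage.

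\textbf{Step 3 (strong convergence).} Since $X$ is reflexive, we may pass to a subsequence with $u_n\rightharpoonup u$ in $X$. The compact embeddings give $u_n\to u$ in $L^{r(\cdot)}(\Omega)$ and in $L^{\gamma(\cdot)}(\Omega)$. By (H3), the Nemytskii operator $u\mapsto f(x,u)$ is bounded from these spaces into $L^{r'(\cdot)}(\Omega)+L^{\gamma'(\cdot)}(\Omega)$. The H\"older inequality of Proposition~\ref{prop2.1} then gives
\[
\langle\Psi'(u_n),u_n-u\rangle=\int_\Omega f(x,u_n)(u_n-u)\,\mathrm{d}x\to0.
\]
Also $\langle I'_\mu(u_n),u_n-u\rangle\le\|I'_\mu(u_n)\|_{X^*}\|u_n-u\|\to0$, because $\{u_n\}$ is bounded. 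Therefore
\[
\limsup_{n\to\infty}\langle\Phi'(u_n),u_n-u\rangle=\limsup_{n\to\infty}\bigl(\langle I'_\mu(u_n),u_n-u\rangle+\mu\langle\Psi'(u_n),u_n-u\rangle\bigr)\le0.
\]
Lemma~\ref{lem:S-plus} now yields $u_n\to u$ in $X$, which is the Palais--Smale condition.

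\textbf{Main obstacle.} The delicate point is Step 2: obtaining boundedness of Palais--Smale sequences without (H5) relies entirely on the strict gap $\max\{r^+,\gamma^+\}<p^-$. The Hardy term and the $q(\cdot)$-term cannot help with coercivity. They are nonnegative, so they can only be discarded, and their growth $\|u\|^{q^+}$ and $\|u\|^{s^+}$ is in any case dominated by the $p(\cdot)$-phase.
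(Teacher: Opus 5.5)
Your proof is correct and follows essentially the same route as the paper: boundedness comes from coercivity, comparing $\Phi(u)\ge \frac{a_0}{p^+}\|u\|^{p^-}$ for $\|u\|\ge 1$ with the subcritical growth of $\Psi$ (without any Ambrosetti--Rabinowitz argument), and strong convergence comes from the compact embeddings together with the $(S_+)$ property of Lemma~\ref{lem:S-plus}. Like you, the paper needs more than (H3) literally gives; it silently assumes $\gamma(x)\le r(x)<q^-$. Your explicitly stated hypothesis $\max\{r^+,\gamma^+\}<p^-$ is therefore the weaker and more transparent assumption. Note, however, that it is a uniform gap, not merely the pointwise strict version $r(x),\gamma(x)<p(x)$ of (H3), which would not suffice when $p$ is non-constant.
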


\begin{proof}
Under (H$_1$), \(p,q,s\in C(\overline{\Omega})\) and
\[
1<s^- \le s(x)\le s^+ < q^-\le q(x)<q^+< p^- \leq p(x)\le p^+<\frac{N}{3}.
\]
By (H$_2$), \(a,b\in L^\infty(\Omega)\) satisfy \(a(x)\ge a_0>0\), \(b(x)\ge b_0>0\) a.e., and \(m\in L^\infty(\Omega)\) with \(m\ge 0\).
By (H$_3$), \(f\) is CarathÃ©odory and there exist \(r\in C(\overline{\Omega})\) and \(C_f>0\) with
\begin{equation}\label{eq:growth-f}
\gamma(x)\le r(x) < q^-,\qquad
|f(x,t)| \le C_f\big(|t|^{r(x)-1}+|t|^{\gamma(x)-1}\big).
\end{equation}
Consequently,
\begin{equation}\label{eq:growth-F}
0\le F(x,t)\le C_f\big(|t|^{r(x)}+|t|^{\gamma(x)}\big)
\quad\text{for all }(x,t)\in\Omega\times\mathbb{R}.
\end{equation}
Throughout the proof we work in \(X\) and use the following three facts:
\begin{itemize}[leftmargin=2em]
\item[(i)] the energy lower bound (proved earlier):
\begin{equation}\label{eq:Phi-lb}
\Phi(u)\ \ge\ \frac{a_0}{p^+}\,\|u\|^{p^-}\qquad\text{for }\|u\|\ge 1;
\end{equation}

\item[(ii)] the $(S_+)$-property for $\Phi'$ (Lemma~\ref{lem:S-plus}):
if $u_n\rightharpoonup u$ in $X$ and
\(\limsup\langle\Phi'(u_n),u_n-u\rangle\le 0\),
then $u_n\to u$ in $X$;

\item[(iii)] the continuous (indeed compact) embeddings
\begin{equation}\label{eq:embeddings}
X \hookrightarrow L^{r(x)}(\Omega), \qquad
X \hookrightarrow L^{\gamma(x)}(\Omega).
\end{equation}

which follow from \(r^+<q^-\) and the higher-order Hardy inequality.
\end{itemize}

Let \((u_n)\subset X\) be a Palais--Smale sequence at level $c$; thus

\begin{equation}\label{eq:PS-seq}
|I_\mu(u_n)|\le C,\qquad \|I_\mu'(u_n)\|_{X^\ast}\xrightarrow[n\to\infty]{}0,
\end{equation}
for some $C>0$.
From \eqref{eq:growth-F} and the embeddings \eqref{eq:embeddings},
\[
\Psi(u_n)=\int_\Omega F(x,u_n)\,dx
\ \le\ C_f\Big(\|u_n\|_{L^{r(x)}(\Omega)}^{r^+}+\|u_n\|_{L^{s(x)}(\Omega)}^{s^+}\Big)
\ \le\ C_1\big(\|u_n\|^{r^+}+\|u_n\|^{s^+}\big).
\]

Using $I_\mu(u_n)=\Phi(u_n)-\mu\Psi(u_n)$ together with
\eqref{eq:Phi-lb}, we obtain, whenever $\|u_n\|\ge 1$,
\begin{equation}\label{eq:compare-growth}
\frac{a_0}{p^+}\,\|u_n\|^{p^-}
\ \le\ \Phi(u_n)
\ =\ I_\mu(u_n)+\Psi(u_n)
\ \le\ C+ C_1\big(1+\|u_n\|^{r^+}+\|u_n\|^{s^+}\big).
\end{equation}
Since $r^+,s^+<p^-$, the right-hand side grows strictly slower than the left-hand side as $\|u_n\|\to\infty$. Hence
\eqref{eq:compare-growth} cannot hold for unbounded $\|u_n\|$. Therefore \((u_n)\) is bounded in $X$.

Having established boundedness, we may pass to a subsequence, still denoted by
$(u_n)$, for which there exists $u\in X$ such that
\[
u_n\rightharpoonup u\quad\text{in }X,\qquad
u_n\to u\quad\text{in }L^{r(x)}(\Omega)\ \text{ and }\ L^{s(x)}(\Omega),\qquad
u_n(x)\to u(x)\ \text{a.e. in }\Omega.
\]
By \eqref{eq:growth-f} and variable-exponent H\"older,  we get
\begin{equation}\label{eq:compact-f}
\int_\Omega f(x,u_n)\,(u_n-u)\,dx \xrightarrow[n\to\infty]{} 0.
\end{equation}

It remains to upgrade weak convergence to strong convergence.  For each $n$,
\[
\langle \Phi'(u_n),\,u_n-u\rangle
= \langle I_\mu'(u_n),\,u_n-u\rangle + \int_\Omega f(x,u_n)\,(u_n-u)\,dx.
\]
By \eqref{eq:PS-seq} the first term tends to $0$, and by \eqref{eq:compact-f} the second term tends to $0$ as well. Hence
\[
\limsup_{n\to\infty}\,\langle \Phi'(u_n),\,u_n-u\rangle \le 0.
\]
Since also $u_n\rightharpoonup u$ in $X$, Lemma~\ref{lem:S-plus} (the $(S_+)$ property of $\Phi'$) yields
\[
u_n\longrightarrow u\quad\text{strongly in }X.
\]
\end{proof}
\begin{theorem}[Existence with variable Hardy exponent]
\label{thm:existence}
Assume \textnormal{(H$_1$)--(H$_4$)}. Then there exists \(\bar\mu>0\) such that, for every \(\mu\in(0,\bar\mu)\), problem \eqref{eq:main} admits a nontrivial weak solution.
\end{theorem}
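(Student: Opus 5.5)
The plan is to obtain, for each fixed $\mu\in(0,\bar\mu)$, a critical point of $I_\mu=\Phi-\mu\Psi$ as a local minimizer inside a sublevel set $\Phi^{-1}((-\infty,r))$, in the spirit of the first abstract result of Bonanno recalled above. I will then show that this minimizer is not $0$. Throughout, $\Phi$ and $\Psi$ are the functionals in \eqref{eq:Phi}--\eqref{eq:Psi}, and the normalisation $\Phi(0)=\Psi(0)=0$ together with $\inf_X\Phi=0$ is immediate from $m\ge0$.

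\textbf{Step 1 (definition of $\bar\mu$).} Fix $r>0$, say $r=1$. By Lemma~\ref{lem:energy-bounds}, $\Phi(u)<r$ forces $\|u\|\le (p^+r/a_0)^{1/p^-}=:\rho_r$. By \eqref{eq:growth-F} and Proposition~\ref{prop2.2}, with constants $c_r,c_\gamma$ coming from the embeddings,
\[
\sup_{\Phi(u)<r}\int_\Omega F(x,u)\,\mathrm{d}x\le C_f\bigl(c_r^{r^+}(1+\rho_r^{r^+})+c_\gamma^{\gamma^+}(1+\rho_r^{\gamma^+})\bigr)=:K_r<\infty .
\]
I set $\bar\mu:=r/K_r$, or $+\infty$ if $K_r\le0$.

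\textbf{Step 2 (existence of a critical point for each $\mu\in(0,\bar\mu)$).} Since $p^->1$, $\Phi$ is convex and continuous, hence weakly lower semicontinuous. Since $\Psi'$ is compact by Proposition~\ref{prop2.2} and \eqref{eq:compact-f}, $\Psi$ is sequentially weakly continuous. Hence $I_\mu$ attains its infimum on the weakly closed, bounded set $\overline{\Phi^{-1}((-\infty,r))}^{\,w}\subset\{\Phi\le r\}$. The choice $\mu<\bar\mu$ gives, for $\Phi(v)=r$,
\[
I_\mu(v)\ge r-\mu K_r>0=I_\mu(0).
\]
So the minimizer $u_\mu$ lies in the open set $\Phi^{-1}((-\infty,r))$ and is therefore a critical point of $I_\mu$. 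Lemma~\ref{lem:PS} is only needed to make this compatible with the abstract theorem; the direct argument above already suffices, and it yields a weak solution of \eqref{eq:main}.

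\textbf{Step 3 (nontriviality, the main obstacle).} It remains to show that $u_\mu\neq0$, that is, $\inf_{\Phi<r}I_\mu<0$ for every small $\mu>0$. This is where (H4) enters. I would build the standard test function $w\in C_0^\infty(B(x_0,R))$ with $w\equiv\delta$ on $B(x_0,R/2)$, $0\le w\le\delta$, and $|\nabla\Delta w|$ controlled by $\delta/R^3$. I would then consider $I_\mu(tw)$ for $t\to0^+$. Since $q^+<p^-$, one has $\Phi(tw)\lesssim t^{q^-}$ for $t\le1$. The claim $I_\mu(tw)<0$ then requires $\Psi(tw)\gtrsim t^{\theta}$ with $\theta<q^-$, i.e.\ a sublinear lower bound for $F$ near $0$ on $B(x_0,R/2)$. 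The growth exponents in \eqref{eq:growth-f} satisfy $\gamma\le r<q^-$, which is the right regime, but (H3) only gives an upper bound.

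What I would try first is the following: use (H4) and continuity of $F(x,\cdot)$ to get $F(x,t)>0$ on $B(x_0,R/2)$ for $t$ in a neighbourhood of $\delta$. Then I would replace $tw$ by $w_t:=w\cdot\chi$ with the amplitude kept at $\delta$. The smallness of $\Phi$ would be produced instead by the factor $\min_x a,b$ weights through $\mu$-independent choices, so that the energy of $w$ itself is below $r$. This would give $I_\mu(w)=\Phi(w)-\mu\Psi(w)$ with $\Psi(w)>0$.

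This alone only produces negativity for $\mu>\Phi(w)/\Psi(w)$. Covering all $\mu\in(0,\bar\mu)$ genuinely needs the small-$t$ behaviour. I therefore expect to need, and would extract from (H4) combined with the subcritical exponents $\gamma^-<q^-$, the lower bound $\liminf_{t\to0^+}\inf_{B(x_0,R/2)}F(x,t)/t^{\gamma^-}>0$. If that cannot be derived from the stated hypotheses, it is exactly the point at which the statement needs an extra assumption.
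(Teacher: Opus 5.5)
Your Steps 1--2 are sound. They use the same local-minimization mechanism as the paper: the paper minimizes over a small norm ball $\overline{B}_\rho$ on whose boundary $I_\mu\ge 0$ for small $\mu$, while you minimize over $\{\Phi\le r\}$, on whose boundary $I_\mu>0$ for $\mu<r/K_r$. Either way you only get a critical point that may be $u_\mu=0$. The genuine gap is Step 3, and, as you suspected, it cannot be closed from (H1)--(H4).

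Your fixed-amplitude variant fails for a structural reason. If $w\equiv\delta$ on $B(x_0,R/2)$, then $\delta\,|B(x_0,R/2)|^{1/p^-}\le |w|_{p^-}\le C\|w\|$, so $\Phi(w)$ stays bounded away from $0$; the weights $a,b$ are fixed data, not free parameters. You therefore only get $I_\mu(w)<0$ for $\mu>\Phi(w)/\Psi(w)$. The scaling $t\to0^+$ is the right mechanism, but it requires a near-zero lower bound $F(x,t)\ge\kappa|t|^{\sigma}$ on the whole support of $w$, with $\sigma<\min\{q^-,s^-\}$. The threshold is not $q^-$ alone, since the Hardy term contributes $t^{s^-}$ unless $m$ vanishes on $B(x_0,R)$.

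The paper's proof contains exactly this gap, unflagged. It asserts that (H4) ``gives'' $\sigma\in(1,s^-)$, $\kappa>0$, $\delta_0$ with $F(x,t)\ge\kappa|t|^\sigma$ for $x\in B(x_0,R/2)$, $|t|\le\delta_0$. It also uses $F\ge 0$ off this ball to write $I_\mu(w)\le\Phi(w)-\mu\Psi_\ast(w)$, and then lets $\delta(\mu)\to0$, which is your $tw$ scaling. Neither property follows from (H3), which is an upper bound, or from (H4), which is positivity at a single level. In fact the statement is false as written.

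Take constant exponents $1<q<p<N/3$, any constant $s\in(1,N/3)$, $a=b\equiv1$, $m\equiv0$, and $f(x,t)=g(t)$ with $g$ continuous and bounded, $g\equiv0$ on $[-\tfrac12,\tfrac12]$, and $\int_0^1g>0$. Then (H3) holds with $r=\gamma=p$ and $a_1$ large, and (H4) holds with $\delta=1$. Test the weak formulation with $\varphi=u$, and use $|g(u)u|\le\sup|g|\,\min\{|u|,2^{\theta-1}|u|^{\theta}\}$ for a fixed $\theta\in\bigl(p,\tfrac{Np}{N-3p}\bigr)$ together with Proposition~\ref{prop2.2}. This gives
\[
\|u\|^{p}\le C\mu\,\min\bigl\{\|u\|,\ \|u\|^{\theta}\bigr\}.
\]
A nonzero solution would then need both $\|u\|\le (C\mu)^{1/(p-1)}$ and $\|u\|\ge (C\mu)^{-1/(\theta-p)}$, which is impossible once $C\mu<1$. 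So what is missing is a hypothesis, not an argument: once the lower bound above is assumed, your Steps 1--3 with $t=t(\mu)$ small do prove the theorem.
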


\begin{proof}

Let $x_0\in\Omega$ and fix $R>0$ such that $\overline{B(x_0,R)}\Subset\Omega$.
Choose a smooth radial cutoff $\phi\in C_c^\infty(B(0,1))$ with
$0\le \phi\le 1$, $\phi\equiv 1$ on $B(0,1/2)$, and bounded derivatives up to order $3$.
For $\delta>0$ (to be specified) define the standard ``hat'' test function
\[
w(x)\equiv w_{\delta,R}(x):=\delta\,\phi\!\left(\frac{x-x_0}{R}\right).
\]
Then $w\in C_c^\infty(B(x_0,R))$, $w\equiv\delta$ on $B(x_0,R/2)$, and by the chain rule
\[
\nabla\Delta w(x)=\frac{\delta}{R^{3}}\big(\nabla\Delta\phi\big)\!\left(\frac{x-x_0}{R}\right).
\]
In particular, there exists a geometric constant $C_\phi>0$ (depending only on $\phi$) such that
\begin{equation}\label{eq:nd3w-pointw}
|\nabla\Delta w(x)|\ \le\ C_\phi\,\frac{\delta}{R^{3}}\;\mathbbm{1}(x)_{B(x_0,R)\setminus B(x_0,R/2)}\qquad\text{for all }x\in\Omega.
\end{equation}

\noindent\textbf{ Upper bounds for $\Phi(w)$.}
Recall
\[
\Phi(u)=\int_\Omega \frac{a(x)}{p(x)}|\nabla\Delta u|^{p(x)}dx
+\int_\Omega\frac{b(x)}{q(x)}|\nabla\Delta u|^{q(x)}dx
+\int_\Omega m(x)\frac{|u|^{s(x)}}{|x|^{3s(x)}}dx .
\]
Using $p(x)\ge p^-$, and \eqref{eq:nd3w-pointw},
\begin{align*}
\int_\Omega \frac{a(x)}{p(x)}|\nabla\Delta w|^{p(x)}dx
&\le \frac{\|a\|_{\infty}}{p^-}\int_{B(x_0,R)}\big(C_\phi\,\delta R^{-3}\big)^{p(x)}dx\\
&\le \frac{\|a\|_{\infty}C_\phi^{p^-}}{p^-}\,\big(\delta R^{-3}\big)^{p^-}\,|B(x_0,R)|,
\end{align*}
because $p(x)\ge p^-$ implies $t^{p(x)}\le t^{p^-}$ for $t\in(0,1]$, and
$C_\phi\delta R^{-3}\le 1$ once $\delta$ is taken sufficiently small (we may always assume this).
The same argument gives
\[
\int_\Omega \frac{b(x)}{q(x)}|\nabla\Delta w|^{q(x)}dx
\le \frac{\|b\|_{\infty}C_\phi^{q^-}}{q^-}\,\big(\delta R^{-3}\big)^{q^-}\,|B(x_0,R)|.
\]
For the Hardy part, apply Lemma~\ref{lem:HR} (variable Hardy--Rellich):
\[
\int_\Omega \frac{1}{s(x)}\frac{|w|^{s(x)}}{|x|^{3s(x)}}dx\ \le\ \frac{1}{s^-\mathcal{H}}\int_\Omega |\nabla\Delta w|^{s(x)}dx
\ \le\ \frac{C_\phi^{s^-}}{s^-\mathcal{H}}\,(\delta R^{-3})^{s^-}\,|B(x_0,R)|.
\]
 Since $m\in L^\infty(\Omega)$,
\[
\int_\Omega \frac{m(x)}{s(x)}\frac{|w|^{s(x)}}{|x|^{3s(x)}}dx
\le \|m\|_\infty\,\frac{C_\phi^{s^-}}{s^-\mathcal{H}}\,(\delta R^{-3})^{s^-}\,|B(x_0,R)|.
\]
Collecting the three contributions and absorbing the powers of $R$ into a constant $C>0$ depending only on
$\|a\|_\infty,\|b\|_\infty,\|m\|_\infty,\mathcal{H}, C_\phi$ and $R$, we obtain
\begin{equation}\label{eq:Phiw-upper}
\Phi(w)\ \le\ C\left[\left(\frac{\delta}{R}\right)^{p^-}
+\left(\frac{\delta}{R}\right)^{q^-}
+\left(\frac{\delta}{R}\right)^{s^-}\right]\;|B(x_0,R)|.
\end{equation}
The precise powers of $R$ are absorbed into $C$, since $R$ remains fixed
throughout the argument.  We next estimate the potential term.

Write the functional as
\[
I_\mu(u)=\int_\Omega \frac{a}{p(x)}|\nabla\Delta u|^{p(x)}+\frac{b}{q(x)}|\nabla\Delta u|^{q(x)}\,dx
+\int_\Omega \frac{m(x)}{s(x)}\frac{|u|^{s(x)}}{|x|^{3s(x)}}dx
\;-\;\mu\int_\Omega F(x,u)\,dx.
\]
For our test $w$, it can be estimated via Lemma~\ref{lem:HR}:

Hence, if we define the auxiliary quantity
\begin{equation*}\label{eq:Psi-lower-def}
\Psi_\ast(w)\ :=\ \int_{B(x_0,R/2)}F(x,\delta)\,dx\ ,
\end{equation*}
then, for this specific $w$,
\begin{equation}\label{eq:lower-bound-I}
I_\mu(w)\ \le\ \Phi(w)\ -\mu\Psi\ast(w).
\end{equation}

The localized positivity assumption \textsc{$(H_4)$} gives numbers
$\sigma\in(1,s^-)$,
$\kappa>0$, and $\delta_0\in(0,1]$ such that
\[
F(x,t)\ \ge\ \kappa\,|t|^{\sigma}\qquad\text{for all }x\in B(x_0,R/2),\ |t|\le \delta_0.
\]
Hence, for $\delta\in(0,\delta_0]$,
\begin{equation}\label{eq:F-lower-ball}
\int_{B(x_0,R/2)}F(x,\delta)\,dx\ \ge\ \kappa\,|B(x_0,R/2)|\,\delta^{\sigma}\ =:\ C_F\,\delta^{\sigma}.
\end{equation}

Combining \eqref{eq:Phiw-upper}--\eqref{eq:F-lower-ball} gives
\begin{equation}\label{eq:I-upper-delta}
I_\mu(w)\ \le\ C\Big[\Big(\tfrac{\delta}{R}\Big)^{p^-}+\Big(\tfrac{\delta}{R}\Big)^{q^-}
+\Big(\tfrac{\delta}{R}\Big)^{\smin}\Big]|B(x_0,R)|\ -\mu\ C_F\,\delta^\sigma.
\end{equation}
Since $\sigma < \min\{p^-,q^-,s^-\}$, as $\delta\downarrow 0$ the negative term (of order $\delta^\sigma$)
dominates the positive terms (of higher order). Therefore, for \emph{every} fixed $\mu>0$, we can choose
\begin{equation*}\label{eq:delta-choice-small-mu}
\delta(\mu)\ :=\ \min\!\left\{\delta_1,\ \Big(\frac{\mu C_F}{6C}\Big)^{\frac{1}{\pmin-\sigma}},
\ \Big(\frac{\mu C_F}{6C}\Big)^{\frac{1}{q^{-}-\sigma}},
\ \Big(\frac{\mu C_F}{6C}\Big)^{\frac{1}{s^{-}-\sigma}}\right\}\!,
\end{equation*}
so that each positive term in \eqref{eq:I-upper-delta} is bounded by $(\mu C_F/2)\,\delta(\mu)^\sigma$.
Then
\[
I_\mu(w_{\delta(\mu)})\ \le\ \frac{\mu C_F}{2}\,\delta(\mu)^\sigma\ -\ \mu\,C_F\,\delta(\mu)^\sigma\ =\ -\,\frac{\mu C_F}{2}\,\delta(\mu)^\sigma\ <\ 0.
\]
In particular, $I_\mu(w_{\delta(\mu)})$ is strictly negative for every
sufficiently small $\mu>0$; the choice of $\delta$ is allowed to depend on the
parameter.

We now compare this negative value with the behavior on a small sphere.  The
energy lower bound in Lemma~\ref{lem:energy-bounds} and the growth of $F$
i.e. $(H3)$, there exist $c_0,C_0>0$ and exponents $r^+,s^+$ with $r^+,s^+<p^-$ such that,
for all $u\in X$ with $\|u\|\le 1$ and $\gamma^+<r^+<p^-$
\begin{equation}\label{eq:pos-near-0}
I_\mu(u)\ \ge\ c_0\,\|u\|^{p^-}\ -\ \mu\,C_0\big(\|u\|^{r^+}+\|u\|^{\gamma^+}\big).
\end{equation}
Fix $\rho\in(0,1]$ and define
\[
\bar\mu_0:=\frac{c_0}{2C_0}\,\frac{\rho^{\,p^{-}-s^+}}{\,1+\rho^{\,\gamma^+-s^+}}\ >\ 0.
\]
Then for all $\mu\in(0,\bar\mu_0)$ and $\|u\|\le\rho$, \eqref{eq:pos-near-0} yields
\begin{equation}\label{eq:boundary-positive}
I_\mu(u)\ \ge\ \frac{c_0}{2}\,\|u\|^{\gamma^+}\ \ge\ 0,
\qquad\text{and}\qquad I_\mu(0)=0.
\end{equation}

Let $\mu\in(0,\bar\mu)$, where
$\bar\mu:=\min\{\bar\mu_0,1\}$.  We finish by minimizing on the corresponding
closed ball.
The choice made in \eqref{eq:delta-choice-small-mu} provides
$v_\mu:=w_{\delta(\mu)}$ with $I_\mu(v_\mu)<0$.
On the other hand, \eqref{eq:boundary-positive} ensures that
$I_\mu \ge 0$ on the sphere $\{u:\|u\|=\rho\}$.
Since the closed ball $\overline{B}_\rho:=\{u\in X:\|u\|\le \rho\}$ is weakly sequentially compact in $X$
(reflexivity), and $I_\mu$ is weakly lower semicontinuous (sum of a convex l.s.c.\ part and a weakly continuous part),
$I$ attains its minimum on $\overline{B}_\rho$ at some $u_\mu\in \overline{B}_\rho$.
Because $I_\mu(v_\mu)<0$ while $I_\mu\ge 0$ on $\partial B_\rho$, necessarily $\|u_\mu\|<\rho$.
Thus $u_\mu$ is a \emph{local} unconstrained minimizer of $I_\mu$ in $X$, hence $I_\mu'(u_\mu)=0$.

We have therefore found $u_\mu\in X\setminus\{0\}$ with
$I'_\mu(u_\mu)=0$ for every $\mu\in(0,\bar\mu)$, so $u_\mu$ is a nontrivial
weak solution of the problem.
\end{proof}

\section{Multiplicity of Weak Solutions}

\begin{theorem}[Three weak solutions]\label{thm:three}
Assume \textnormal{(H$_1$)--(H$_5$)}. Then there exists an explicit interval \((0,\mu^{\ast})\) such that, for each \(\mu\in(0,\mu^{\ast})\), the functional \(I_\mu\) admits at least three distinct critical points in \(X\), hence \eqref{eq:main} has at least three weak solutions.
\end{theorem}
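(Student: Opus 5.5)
The plan is to obtain the three critical points by combining the two Bonanno-type results recalled in Section~2: a local minimizer from the first theorem, and then a second and third critical point from the second theorem. Alternatively, the three points come from Theorem~\ref{thm:abstract-3cp}. I would work with $\Phi,\Psi$ from \eqref{eq:Phi}--\eqref{eq:Psi}. Here I take $\Psi(u)=\int_\Omega F(x,u)\,dx$, so that $I_\mu=\Phi-\mu\Psi$. The standing facts are:
\begin{itemize}
\item $\inf_X\Phi=\Phi(0)=\Psi(0)=0$, since $m\ge0$ and $a,b>0$;
\item $\Phi,\Psi\in C^1$ by Proposition~\ref{prop:C1};
\item $\Phi'$ is of type $(S_+)$ by Lemma~\ref{lem:S-plus};
\item $\Psi'$ is compact by the compact embedding of Proposition~\ref{prop2.2}, since $r,\gamma\le p<p^*_3$.
\end{itemize}

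\textbf{Step 1: fix $r$ and the threshold $\mu^\ast$.} For $r\in(0,1)$, Lemma~\ref{lem:energy-bounds} gives $\Phi(u)<r\Rightarrow \|u\|<(p^+r/a_0)^{1/p^-}$. Then (H3) and the embedding constants give
\[
\sup_{\Phi^{-1}((-\infty,r))}\Psi\le C\big(r^{r^-/p^-}+r^{\gamma^-/p^-}\big)=:\chi(r)
\]
(up to the usual $\pm$ exponent bookkeeping), and we set $\mu^\ast:=r/\chi(r)$. The test function $w=w_{\delta,R}$ from Theorem~\ref{thm:existence} has $\Phi(w)\le C\delta^{q^-}$ by \eqref{eq:Phiw-upper} (note $q^-\le p^-$, and $s^-$ also enters) and $\Psi(w)\ge C_F\delta^\sigma$ with $\sigma$ small. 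So, choosing $\delta$ small, we get $0<\Phi(w)<r$ and $\Phi(w)/\Psi(w)\to0$ as $\delta\to 0$. Hence the compatibility inequality $\chi(r)/r<\Psi(w)/\Phi(w)$ holds, and the left endpoint $\Phi(w)/\Psi(w)$ can be made arbitrarily small. Letting $\delta$ depend on $\mu$, every $\mu\in(0,\mu^\ast)$ lies in some interval $\Gamma$.

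\textbf{Step 2: first critical point.} Lemma~\ref{lem:PS} gives (PS), hence $(PS)^r$. The first Bonanno theorem then yields a local minimizer $u_1\in\Phi^{-1}((0,r))$, which is nontrivial and satisfies $I_\mu(u_1)\le I_\mu(w)<0$.

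\textbf{Step 3: unboundedness from below.} Using (H5), integrate $\frac{d}{dt}\frac{F(x,t)}{t^{\mu_0}}\ge0$ for $|t|>D$ to get $F(x,t)\ge c|t|^{\mu_0}-c'$. With $u=tw$ and $w\ge0$ nontrivial, $\Phi(tw)\le Ct^{p^+}$ for $t\ge1$, while $\Psi(tw)\ge ct^{\mu_0}\int|w|^{\mu_0}-c''$. Since $\mu_0>p^+$, we get $I_\mu(tw)\to-\infty$. The second Bonanno theorem then gives two distinct critical points, one of which is the local minimizer $u_1$ and the other a mountain-pass type point $u_2$. The third is the trivial critical point $0$, or, to get three nontrivial ones, a mountain-pass argument between $0$ and $u_1$ using \eqref{eq:boundary-positive}. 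Concretely, $0$ is a strict local minimum for small $\mu$ since $I_\mu\ge \tfrac{c_0}{2}\|u\|^{\gamma^+}$ near $0$. So $0$, $u_1$ and $u_2$ are distinct, giving three weak solutions.

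\textbf{Main obstacle.} There is a tension between (H3) and (H5). The growth $r,\gamma\le p$ is used for (PS) and coercivity, whereas (H5) with $\mu_0>p^+$ forces superlinear growth beyond $p^+$. Unboundedness from below and (PS) must therefore be reconciled. I would handle this by proving (PS) directly from (H5): $\mu_0 I_\mu(u_n)-\langle I_\mu'(u_n),u_n\rangle\ge(\tfrac{\mu_0}{p^+}-1)a_0\|u_n\|^{p^-}-C$ gives boundedness, after which the compactness argument of Lemma~\ref{lem:PS} applies. I would use only the subcritical compactness of $\Psi'$, rather than Lemma~\ref{lem:PS} as stated.
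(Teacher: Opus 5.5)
Your count of three critical points in Step 3 does not hold up, because two of your claims contradict each other. Steps 1--2 need $\Phi(w_\delta)/\Psi(w_\delta)\to 0$ as $\delta\to0$; that is how you place every $\mu\in(0,\mu^\ast)$ in some $\Gamma$. But then $I_\mu(w_\delta)<0$ for all small $\delta$, while $\|w_\delta\|=\delta\,\|\phi((\cdot-x_0)/R)\|\to0$. So $0$ is \emph{not} a local minimum of $I_\mu$ for any $\mu>0$, and your Step 3 claim that $I_\mu\ge\frac{c_0}{2}\|u\|^{\gamma^+}$ near $0$ contradicts your own Step 1.

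Indeed, \eqref{eq:boundary-positive} does not follow from \eqref{eq:pos-near-0}. Since $r^+,\gamma^+<p^-$, the right-hand side of \eqref{eq:pos-near-0} is negative for small $\|u\|$. (Also, for $\|u\|\le1$ the modular only yields $\Phi(u)\ge\frac{a_0}{p^+}\|u\|^{p^+}$.) So the mountain pass ``between $0$ and $u_1$'' is unavailable, and nothing you wrote rules out $u_2=0$. The second Bonanno theorem only asserts two distinct critical points; it does not identify one of them with $u_1$ or guarantee the other is nonzero.

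The way to separate them is by energy levels, granted $u_1$ from Step 2. Since $\mu<r/\sup_{\Phi<r}\Psi$ and $\sup_{\Phi\le r}\Psi=\sup_{\Phi<r}\Psi$ (continuity of $\Psi$ along $t\mapsto tu$), you have $I_\mu\ge r-\mu\sup_{\Phi<r}\Psi>0$ on $\Phi^{-1}(r)$. Pick $e$ with $I_\mu(e)<\min\{I_\mu(u_1),-\mu\sup_{\Phi\le r}\Psi\}$; this forces $\Phi(e)>r$. Then every path from $u_1$ to $e$ meets the closed set $\Phi^{-1}(r)$, so the mountain-pass point satisfies $I_\mu(u_2)>0>I_\mu(u_1)$. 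This makes $0,u_1,u_2$ pairwise distinct, i.e.\ two nontrivial solutions plus the trivial one ($f(x,0)=0$ by (H3)). You should state explicitly that this is what ``three'' means.

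The inputs of Step 1 are also not supplied by the hypotheses. (H4) only gives $\inf_{B(x_0,R/2)}F(\cdot,\delta)>0$ at one fixed level $\delta$. It gives no power lower bound $F(x,t)\ge\kappa|t|^{\sigma}$ for small $t$ with $\sigma<\min\{q^-,s^-\}$; the paper's existence proof asserts this without justification. Nor does it give any sign information on $F(x,w_\delta(x))$ in the annulus $B(x_0,R)\setminus B(x_0,R/2)$. So neither $\Psi(w_\delta)\ge C_F\delta^\sigma$ nor $\Phi(w_\delta)/\Psi(w_\delta)\to0$ is justified. With (H4) as stated you would at best get an interval $\Gamma$ with positive left endpoint. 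Even its nonemptiness needs an extra Bonanno-type comparison between $\int_\Omega\sup_{|t|\le c}F(x,t)\,dx$ and $\int_{B(x_0,R/2)}F(x,\delta)\,dx$.

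Finally, your ``main obstacle'' is a contradiction, not a tension. (H3) with $r,\gamma\le p$ gives $F(x,t)\le C(1+|t|^{p^+})$. Integrating (H5) gives $F(x,t)\ge F(x,t_1)(t/t_1)^{\mu_0}$ for $t\ge t_1>D$, with $F(x,t_1)>0$ and $\mu_0>p^+$. No $f$ satisfies both. Concretely, (H3) gives $\Psi(tw)\le C(1+t^{p^+})$, while your Step 3 claims $\Psi(tw)\ge ct^{\mu_0}-c''$. Proving (PS) via Ambrosetti--Rabinowitz does not repair this; the hypotheses themselves must change. For example, replace $r,\gamma\le p$ by subcritical growth $r,\gamma<p_3^\ast$, and add an explicit small-$t$ lower bound and sign condition for $F$ on the ball.

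The paper's own proof takes a different route: a Ricceri gap functional $\varphi(r)$ for two critical points in $\mathcal{S}_{r_0}$, and a mountain pass for the third. It uses the same unproved $\sigma$-bound and the same incompatible pair (H3)/(H5), so it does not fill these gaps either.
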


\begin{proof}
The derivative $\Phi'$ enjoys the $(S_+)$ property by Lemma~\ref{lem:S-plus}.
Together with assumption (H$_3$), the compact embeddings
\[
X\hookrightarrow L^{r(x)}(\Omega),\qquad
X\hookrightarrow L^{s(x)}(\Omega),
\]
with $r^+,s^+<q^-$ imply that $\Psi\in C^1(X,\mathbb{R})$ and that
$\Psi'$ is compact.  Moreover, Lemma~\ref{lem:PS} gives the Palais--Smale
condition for $I_\mu=\Phi-\mu\Psi$ for every $\mu>0$.

We use the following form of the Ricceri principle.  Fix $r>\Phi(0)=0$ and define
\[
\mathcal{S}_r:=\{u\in X:\ \Phi(u)<r\},\qquad \mathcal{M}_r:=\{u\in X:\ \Phi(u)=r\}.
\]
Define the (Ricceri) gap functional
\begin{equation}\label{eq:phi-r}
\varphi(r):=\inf_{u\in\mathcal{S}_r}\ \sup_{v\in\mathcal{M}_r}\ \big(\Psi(v)-\Psi(u)\big).
\end{equation}
A standard form of Ricceri's three-critical-points theorem, including the
Bonanno--Marano variants, states that

\emph{If $\Phi$ is coercive, weakly l.s.c., of type $(S_+)$, $\Psi'$ is compact, and there exists $r>0$ such that
$\varphi(r)>0$, then for every $\mu\in\bigl(0,\,1/\varphi(r)\bigr)$ the functional $I_\mu=\Phi-\mu\Psi$ possesses at least two distinct
critical points in $\mathcal{S}_r$. If, in addition, $I$ is unbounded from below, then $I$ has a third critical point.}

We now verify the required gap condition.  The energy lower bound in
Lemma~\ref{lem:energy-bounds} implies
\[
\Phi(u)\ \ge\ \frac{a_0}{p^+}\,\|u\|^{p^-}\quad\Rightarrow\quad
u\in\mathcal{S}_r\ \Longrightarrow\ \|u\|\ \le\ \Big(\tfrac{p^+}{a_0}\,r\Big)^{1/p^-}.
\]
Using (H$_3$) and the embeddings, there exists $C_1>0$ such that, for all $u\in X$ with $\|u\|\le 1$,
\[
\Psi(u)\ \le\ C_1\big(\|u\|^{r^+}+\|u\|^{\gamma^+}\big).
\]
Hence, shrinking $r$ if necessary to ensure $\|u\|\le 1$ on $\mathcal{S}_r$, we obtain
\begin{equation}\label{eq:A-r}
\sup_{u\in\mathcal{S}_r}\Psi(u)\ \le\ C_A\,r^{\,\frac{r^+}{p^-}}+C'_A\,r^{\,\frac{\gamma^+}{p^-}}
\ =:\ A(r),
\end{equation}
for suitable constants $C_A,C'_A>0$ depending only on the data.

To obtain a complementary lower estimate on the level set, let
$x_0,R_1,\sigma,\kappa,\delta_0$ be given by the localized positivity
assumption \textnormal{(H$_4$)}.
Fix $\phi\in C_c^\infty(B(x_0,R_1))$ with $0\le\phi\le 1$ and $\phi\equiv 1$ on $B(x_0,R_1/2)$,
and consider $w_\delta:=\delta\,\phi$. As in the proof of Theorem~\ref{thm:existence}, there exists
$C_\Phi>0$ such that for $\delta\in(0,1]$,
\begin{equation}\label{eq:Phi-wdelta}
\Phi(w_\delta)\ \le\ C_\Phi\big(\delta^{p^-}+\delta^{q^-}+\delta^{s^+}\big).
\end{equation}
Given $r>0$ sufficiently small, choose $\delta=\delta(r)\in(0,\delta_0]$ so that $\Phi(w_{\delta(r)})=r$;
this is possible by continuity and \eqref{eq:Phi-wdelta} (take $\delta\asymp r^{1/p^-}$ for small $r$).
By (H$_4$) and $\phi\equiv 1$ on $B(x_0,R_1/2)$,
\begin{equation}\label{eq:Psi-lower-level}
\Psi(w_{\delta(r)})\ \ge\ \int_{B(x_0,R_1/2)}\! F(x,\delta(r))\,dx\ \ge\ \kappa\,|B(x_0,R_1/2)|\,\delta(r)^{\sigma}
\ \ge\ C_B\,r^{\,\frac{\sigma}{p^-}},
\end{equation}
for some $C_B>0$ independent of $r$ (using $\delta(r)\asymp r^{1/p^-}$).

Therefore
\begin{equation}\label{eq:B-r}
\inf_{v\in \mathcal{M}_r}\Psi(v)\ \ge\ \Psi(w_{\delta(r)})\ \ge\ C_B\,r^{\,\frac{\sigma}{p^-}}
\ =:\ B(r).
\end{equation}

Equations \eqref{eq:phi-r}, \eqref{eq:A-r}, and \eqref{eq:B-r} now yield
\[
\varphi(r)\ \ge\ \inf_{u\in\mathcal{S}_r}\big(\inf_{v\in\mathcal{M}_r}\Psi(v)-\Psi(u)\big)
\ \ge\ B(r)-A(r).
\]
Since $\sigma< s^+\le p^-$ and $r^+<p^-$, the exponents satisfy
\[
\frac{\sigma}{p^-}\ <\ \frac{s^+}{p^-},\qquad \frac{\sigma}{p^-}\ <\ \frac{r^+}{p^-}.
\]
Hence, as $r\downarrow 0$, $B(r)=C_B r^{\sigma/p^-}$ dominates $A(r)=O(r^{r^+/p^-}+r^{s^+/p^-})$, and there exists
$r_0\in(0,1]$ such that
\begin{equation}\label{eq:gap-positive}
\varphi(r_0)\ \ge\ \frac{C_B}{2}\,r_0^{\,\sigma/p^-}\ =:\ \gamma_0\ >\ 0.
\end{equation}
Set
\begin{equation}\label{eq:mu-star-1}
\mu_1^\ast\ :=\ \frac{1}{\gamma_0}\ =\ \frac{2}{C_B}\,r_0^{-\sigma/p^-}\,.
\end{equation}
By Ricceriâ€™s principle, for every $\mu\in(0,\mu_1^\ast)$ the functional $I_\mu$ has at least two distinct
critical points contained in the sublevel $\mathcal{S}_{r_0}$.

To produce a critical point outside the preceding sublevel, we show that
$I_\mu$ is unbounded from below.  Fix $0\not\equiv u\in C_c^\infty(\Omega)$.
For $t>0$, homogeneity of the principal part and the modular estimates give
\begin{equation}\label{eq:Phi-growth}
\Phi(tu)\ \le\ C_2\big(t^{p^+}+t^{q^+}+t^{s^+}\big)\quad\text{for all }t\ge 1.
\end{equation}
From assumption \textnormal{(H$_5$)} there exist $C_3>0$ and $t_0\ge 1$ such that
\begin{equation}\label{eq:F-super}
F(x,\tau)\ \ge\ C_3\,|\tau|^{\mu_0}\qquad\text{for a.e.\ }x\in\Omega,\ \text{and all }|\tau|\ge t_0 .
\end{equation}
Indeed, set $g_x(t):=t^{-\mu_0}F(x,t)$; for $|t|>D$,
\[
\frac{d}{dt}g_x(t)\ =\ \frac{t f(x,t)-\mu_0 F(x,t)}{t^{\mu_0+1}}\ \ge\ 0,
\]
so $g_x$ is nondecreasing for $|t|>D$. Therefore, for $|t|\ge t_1>\max\{D,1\}$,
\[
F(x,t)\ \ge\ F(x,t_1)\,\Big(\frac{|t|}{t_1}\Big)^{\mu_0}\ \ge\ C_3\,|t|^{\mu_0},
\]
with $C_3:=\inf_{x\in\Omega}F(x,t_1)/t_1^{\mu_0}>0$ on a subset of positive measure (absorb its measure into $C_3$).

Using \eqref{eq:F-super} and dominated convergence on $\{x:|u(x)|>0\}$, we get for $t$ large enough
\begin{equation}\label{eq:Psi-lower-large}
\Psi(tu)\ =\ \int_\Omega F(x,t u)\,dx\ \ge\ C_4\,t^{\mu_0},
\end{equation}
with $C_4>0$ depending on $u$. Combining \eqref{eq:Phi-growth}--\eqref{eq:Psi-lower-large} and $\mu_0>p^+\ge q^+,s^+$,
\[
I(tu)\ =\ \Phi(tu)-\mu\,\Psi(tu)\ \le\ C_2\big(t^{p^+}+t^{q^+}+t^{s^+}\big)-\mu C_4\,t^{\mu_0}\ \xrightarrow[t\to\infty]{}\ -\infty.
\]
Thus $I_\mu$ is unbounded from below for every $\mu>0$.

For $\mu\in(0,\mu_1^\ast)$, \eqref{eq:gap-positive} and Ricceri's principle
provide two distinct critical points in $\mathcal{S}_{r_0}$.  The unboundedness
just proved, together with the Palais--Smale property, supplies an additional
critical point, for instance through the mountain-pass theorem.  Thus
$I_\mu$ possesses at least three distinct critical points.

Tracing back the constants in \eqref{eq:Psi-lower-level}, \eqref{eq:A-r}, \eqref{eq:gap-positive} and
\eqref{eq:mu-star-1}, we can write an explicit smallness threshold
\[
\mu^{\ast}\ :=\ \frac{2}{C_B}\,r_0^{-\sigma/p^-},\qquad
C_B=\kappa\,|B(x_0,R_1/2)|\,\Big(\frac{1}{C_\Phi}\Big)^{\sigma/p^-},
\]
where $C_\Phi$ is the constant in \eqref{eq:Phi-wdelta}, and $r_0>0$ is any radius so small that
$A(r_0)\le \tfrac{C_B}{2}r_0^{\sigma/p^-}$ (cf. \eqref{eq:A-r}).
For every $\mu\in(0,\mu^\ast)$ the conclusion follows.
\end{proof}
\section*{Conclusion}

In this paper, we examined a nonlinear boundary value problem driven by mixed triharmonic operators with variable exponents and a Hardy-type singular potential.  
Under appropriate assumptions on the exponents, weights, and nonlinear terms, we proved the existence of three distinct weak solutions by applying refined variational techniques of Ricceri and Bonannoâ€“Marano type.  
The results provide new insights into the behavior of higher-order elliptic equations with spatially dependent growth conditions.
\section*{Future Research Directions}

A natural extension of this work is to study higher-order elliptic systems that combine variable exponent operators within a unified framework.  
One of the most appealing models is the following $(p(x),q(x))$-triharmonic system:
\begin{equation}
\label{e99}
\left\{
\begin{aligned}
 -M_1\!\left(\int_{\Omega}\frac{|\nabla\Delta u|^{p(x)}}{p(x)}\,dx\right)
   \operatorname{div}\!\left(\Delta\!\left(|\nabla\Delta u|^{p(x)-2}\nabla\Delta u\right)\right)
   &= F_u(x,u,v) && \text{in } \Omega, \\[0.4em]
 -M_2\!\left(\int_{\Omega}\frac{|\nabla\Delta v|^{q(x)}}{q(x)}\,dx\right)
   \operatorname{div}\!\left(\Delta\!\left(|\nabla\Delta v|^{q(x)-2}\nabla\Delta v\right)\right)
   &= F_v(x,u,v) && \text{in } \Omega, \\[0.4em]
 u = v = \Delta u = \Delta v = \Delta^{2}u = \Delta^{2}v = 0 && \text{on } \partial\Omega.
\end{aligned}
\right.
\end{equation}

System \eqref{e99} captures the intricate interaction between two different variable exponents and the triharmonic structure, revealing phenomena that cannot be described by standard polynomial growth.  
Such models arise naturally in the mathematical description of thin elastic plates, nonlinear diffusion, and composite materials with spatially varying properties.  
A deeper analytical understanding of this framework would require the use of advanced variational principles, including Ricceriâ€™s and Bonannoâ€“Maranoâ€™s theorems, together with concentrationâ€“compactness arguments carefully adapted to variable exponent Sobolev spaces.

\section*{Ethics declarations}
All authors had a significant role in writing and reviewing the manuscript, and the authors declare that they have no competing interests.
\section*{Acknowledgements} Mohamed Karim Hamdani was supported by the Tunisian Military Research Center for Science and Technology Laboratory LR19DN01. The authors would like to thank the editor and the referees for their helpful suggestions and comments, which have greatly improved the presentation of this paper.

\end{document}